\newtheorem{thm}{Theorem}[section]
\newtheorem{lem}[thm]{Lemma}
\newtheorem{prop}[thm]{Proposition}
\newtheorem{cor}[thm]{Corollary}
\newtheorem{defn}{Definition}[section]
\newtheorem{ques}{Question}[section]
\newtheorem{prob}{Problem}[section]
\newcommand{\A}{\mathcal{A}}
\newcommand{\B}{\mathcal{B}}
\begin{document}

\title{Interpreting a field in its Heisenberg group}

\author{R.\ Alvir, W.\ Calvert, G.\ Goodman, V.\ Harizanov, J.\ Knight, \\
A.\ Morozov, R.\ Miller, A.\ Soskova, and R.\ Weisshaar\footnote{The first, second, fourth, fifth, sixth, seventh and ninth authors are grateful for support from NSF grant DMS \#1600625.  The first, third, and fifth authors also acknowledge support from NSF grant DMS \#1800692.  The fourth author was partially supported by Grant \# 429466 from the Simons Foundation. The seventh author was partially supported by Grant \# 581896 from the Simons Foundation and by the City University of New York PSC-CUNY Research Award Program. The eighth author was partially supported by BNSF, DN 02/16,  and SU Science Fund, 80-10-128/16.04.2020.
}} 

\date{\today}

\maketitle

\begin{abstract}

We improve on and generalize a 1960 result of Maltsev.  For a field $F$, we denote by $H(F)$ the Heisenberg group with entries in $F$.  Maltsev 
showed that there is a copy of $F$ defined in $H(F)$, using existential formulas with an arbitrary non-commuting pair $(u,v)$ as parameters.  We show that $F$ is interpreted in $H(F)$ using computable $\Sigma_1$ formulas with no parameters.  We give two proofs.  The first is an existence proof, relying on a result of Harrison-Trainor, Melnikov, R.\ Miller, and Montalb\'{a}n.  
This proof allows the possibility that the elements of $F$ are represented by tuples in $H(F)$ of no fixed arity.  The second proof is direct, giving explicit finitary existential formulas that define the interpretation, with elements of $F$ represented by triples in 
$H(F)$.  Looking at what was used to arrive at this parameter-free interpretation of $F$ in $H(F)$, we give general conditions sufficient to eliminate parameters from interpretations.

\end{abstract}

\section{Introduction}
\label{sec:intro}

The Heisenberg group of a field $F$ is the upper-triangular subgroup of $GL_3(F)$ in which all
matrices have $1$'s along the diagonal and $0$'s below it.
Maltsev showed that there are existential formulas with parameters, which, for every field $F$, define $F$ in its Heisenberg group $H(F)$.  In this article we will produce existential formulas without parameters, which, for every field $F$, interpret $F$ in $H(F)$. Observing what is used to obtain this result, we will then formulate a general result on removing parameters from an interpretation.

Languages are assumed to be computable, and structures are assumed to have universe a subset of $\omega$.  For a given structure $\mathcal{A}$, the atomic diagram $D(\mathcal{A})$ may be identified, via G\"{o}del numbering, with a subset of $\omega$.  We then identify $\mathcal{A}$ itself with the characteristic function of $D(\mathcal{A})$.  Classes of structures have a fixed language, and are closed under isomorphism.  The following notion, of ``Turing computable embedding,'' is from \cite{CCKM}, based on the earlier notion
of ``Borel embedding'' from \cite{FS}.  

\begin{defn}

For classes $K,K'$, we say that $K$ is \emph{Turing computably embedded} in $K'$, and we write $K\leq_{tc} K'$, if there is a Turing operator $\Theta:K\rightarrow K'$ such that for all $\mathcal{A},\mathcal{B}\in K$, $\mathcal{A}\cong\mathcal{B}$ iff $\Theta(\mathcal{A})\cong\Theta(\mathcal{B})$.  

\end{defn}

Medvedev reducibility is used to compare ``problems,'' where a \emph{problem} is a subset of $\omega^\omega$.  The problems that concern us have the form ``build a copy of $\mathcal{A}$.''

\begin{defn}
  
For structures $\mathcal{A}$ and $\mathcal{B}$, we say that $\mathcal{A}$ is \emph{Medvedev reducible} to $\mathcal{B}$, and we write $\mathcal{A}\leq_s\mathcal{B}$, 
if there is a Turing operator $\Phi$ that takes copies of $\mathcal{B}$ to copies of $\mathcal{A}$.

\end{defn}

We are interested in ``uniform'' Medvedev reductions, which, for a given Turing computable embedding $\Theta$, take any copy of a structure in the range of $\Theta$ to a copy of its pre-image.     

\begin{defn}
  
Let $\Theta$ be a Turing computable embedding of a class $K$ to a class $K'$.  We say that the structures in $K$ are \emph{uniformly Medvedev reducible} to their $\Theta$-images in $K'$, if there is a Turing operator $\Phi$ such that for all $\mathcal{A}\in K$, $\Phi$ serves as a Medvedev reduction of $\mathcal{A}$ to $\Theta(\mathcal{A})$.  

\end{defn}
    
Often, when we have a Turing computable embedding $\Theta:K\rightarrow K'$ with a uniform Medvedev reduction of the structures in $K$ to their $\Theta$-images, it is because there are simple formulas that define, for all $\mathcal{A}\in K$, an interpretation of $\mathcal{A}$ in $\Theta(\mathcal{A})$.  Montalb\'{a}n defined a very general kind of interpretation of $\mathcal{A}$ in $\mathcal{B}$ that yields a uniform Medvedev reduction of $\mathcal{A}$ to $\mathcal{B}$.  In this definition, the tuples from $\mathcal{B}$ that represent elements of $\mathcal{A}$ may have arbitrary arity.  The interpretation is defined by formulas that have no specific arity. Here, the arity of a formula is the number of its free variables.  As usual, we often write $\mathcal{B}$ both for the structure and its domain.

\begin{defn} [Generalized computable $\Sigma_1$-definition]

Let $R\subseteq\mathcal{B}^{<\omega}$, and let $\varphi_n(\bar{x}_n)_{n\in\omega}$ be a computable sequence of computable $\Sigma_1$ formulas, where $\varphi_n(\bar{x}_n)$ has arity $n$.  If for each $n$, $\varphi_n(\bar{x}_n)$ defines $R\cap\mathcal{B}^n$, then we say that $\bigvee_n\varphi_n(\bar{x}_n)$ is a \emph{generalized computable $\Sigma_1$ definition} of $R$.    

\end{defn}   

Since a generalized computable $\Sigma_1$ formula allows consideration of tuples of all finite arities, it is technically not in $L_{\omega_1\omega}$; however, it is a computable disjunction, over all $n\in\omega$, of $L_{\omega_1\omega}$ formulas $\varphi_n$ with free variables $x_1,\ldots,x_n$.  Generalized computable $\Sigma_1$ formulas are involved in the following definition.

\begin{defn} [Montalb\'{a}n]

For a relational structure $\mathcal{A} = (A,(R_i)_{i \in I})$ and a structure $\mathcal{B}$, we say 
$\mathcal{A}$ is \emph{effectively interpreted} in $\mathcal{B}$ if there exist a set 
$D\subseteq\mathcal{B}^{<\omega}$ and relations $\sim$ and $R_i^*$ on $D$ such that  

\begin{enumerate}

\item  $(D,(R_i^*)_{i \in I})/_\sim\cong\mathcal{A}$, 

\item  there is a computable sequence of generalized computable $\Sigma_1$ formulas, with no parameters, defining the set $D$ and the following relations on $D$:  $\sim$ and the complementary relation $\not\sim$, and for each $i$, the relation $R_i^*$ and the complementary relation $\neg{R_i^*}$.

\end{enumerate}

\end{defn} 

\noindent
\textbf{Notation and terminology}:  We may later simply write $\pm \sim$ (or $\pm R_i^*$) for the complementary pair of relations $\sim$ and $\not\sim$ (or $R_i^*$ and $\neg{R_i^*}$).  We may think of the pair of generalized computable $\Sigma_1$ formulas that define the complementary pair pair $\pm\sim$ (or $\pm R_i^*$) as a generalized $\Delta_1$ definition of $\sim$ (or $R_i^*$).  

\bigskip
\noindent
\textbf{Remark}:  In the Russian tradition, a structure that is effectively interpreted in $\B$ is said to be \emph{$\Sigma$-definable in $\B$}.

\bigskip

Below, we illustrate the use of tuples of arbitrary arity.

\begin{prop}
\label{computable}

If $\mathcal{A}$ is computable, then it is effectively interpreted in all structures $\mathcal{B}$.

\end{prop}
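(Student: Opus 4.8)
The plan is to exploit the freedom to use tuples of arbitrary arity by letting the \emph{length} of a tuple carry all of the information: represent $n\in A$ by \emph{every} tuple from $\mathcal{B}$ of length $n+1$. Since $\mathcal{B}$ is nonempty (as structures are), $\mathcal{B}^{\,n+1}$ is nonempty for each $n$, so every element of $A$ does get representatives — and this is precisely why the device works even when $\mathcal{B}$ is finite and $\mathcal{A}$ is infinite, which is the phenomenon the proposition is meant to illustrate. Concretely, I would put
$$D=\{\,\bar b\in\mathcal{B}^{<\omega}:|\bar b|-1\in A\,\},$$
declare $\bar b\sim\bar c$ iff $|\bar b|=|\bar c|$, and, for each relation symbol $R_i$, of arity $k$ say, put $(\bar b_1,\dots,\bar b_k)\in R_i^*$ iff $(|\bar b_1|-1,\dots,|\bar b_k|-1)\in R_i^{\mathcal{A}}$. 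Each $R_i^*$ depends only on the lengths of its arguments, hence is $\sim$-invariant, so the quotient $(D,(R_i^*)_{i\in I})/_\sim$ makes sense.

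Next I would verify requirement (1). The $\sim$-classes of $D$ are exactly the sets $D\cap\mathcal{B}^{\,n+1}$ for $n\in A$, each of which is nonempty, so the map sending the class of a length-$(n+1)$ tuple to $n$ is a well-defined bijection $D/_\sim\to A$. By the definition of $R_i^*$ it carries $R_i^*$ onto $R_i^{\mathcal{A}}$ and reflects it, so it is an isomorphism $(D,(R_i^*)_{i\in I})/_\sim\cong\mathcal{A}$.

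Finally, requirement (2). Because $\mathcal{A}$ is computable, $A$ and the relations $R_i^{\mathcal{A}}$ are computable, uniformly in the computable index set $I$. So for the arity-$n$ piece of the definition of $D$ one effectively decides whether $n-1\in A$ and outputs a fixed trivially true computable $\Sigma_1$ formula (say $\exists y\,(y=y)$) if so and a trivially false one (say $\exists y\,(y\neq y)$) otherwise; the disjunction over $n$ is then a generalized computable $\Sigma_1$ definition of $D$. For $\sim$, on the block of pairs of arities $(m,n)$ output the true formula iff $m-1\in A$, $n-1\in A$, and $m=n$; for $\not\sim$, output the true formula iff $m-1\in A$, $n-1\in A$, and $m\neq n$. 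For $R_i^*$, on the block of arity-tuples $(n_1,\dots,n_k)$ output the true formula iff every $n_j-1\in A$ and $(n_1-1,\dots,n_k-1)\in R_i^{\mathcal{A}}$; for $\neg R_i^*$, iff every $n_j-1\in A$ and $(n_1-1,\dots,n_k-1)\notin R_i^{\mathcal{A}}$. All of these conditions are decidable, uniformly, so every sequence of formulas produced is computable and each formula in it is computable $\Sigma_1$, and no parameters appear. There is essentially no obstacle here; the only points that want a moment's care are the off-by-one (using length $n+1$ rather than $n$, so the empty tuple is never needed and $0\in A$ causes no trouble) and the observation that nonemptiness of $\mathcal{B}$ — automatic for structures — is exactly what guarantees a representative for each element of $A$. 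All the real content sits in the computability of $A$, of the $R_i^{\mathcal{A}}$, and of $I$; the formulas themselves are each equivalent to $\top$ or to $\bot$.
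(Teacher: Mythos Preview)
Your proof is correct and follows essentially the same idea as the paper's: encode each element $n$ of $\mathcal{A}$ by the set of tuples from $\mathcal{B}$ of a fixed length depending on $n$, so that the equivalence classes are determined purely by arity. The paper's version is terser (it takes $D=\mathcal{B}^{<\omega}$ and assumes $A=\omega$ for simplicity), whereas you handle general computable $A\subseteq\omega$, add the length shift to avoid the empty tuple, and spell out the generalized computable $\Sigma_1$ definitions explicitly; these are refinements rather than a different approach.
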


\begin{proof}

Let $D = \mathcal{B}^{<\omega}$.  Let $\bar{b}\sim\bar{c}$ if $\bar{b},\bar{c}$ are tuples of the same length.  For simplicity, suppose $\mathcal{A} = (\omega,R)$, where $R$ is binary.  If $\mathcal{A}\models R(m,n)$, let $R^*(\bar{b},\bar{c})$ for all $\bar{b}$ of length $m$ and $\bar{c}$ of length $n$.  Then $(D,R^*)/_\sim\cong \mathcal{A}$. 
\end{proof}  
 
The following definition was first presented as \cite[Defn.\ 3.1]{MPSS}.   

\begin{defn} 

A \emph{computable functor} from $\mathcal{B}$ to $\mathcal{A}$ is a pair of Turing operators 
$\Phi,\Psi$ such that:  

\begin{enumerate}

\item  $\Phi$ takes copies of $\mathcal{B}$ to copies of $\mathcal{A}$,

\item  $\Psi$ takes each triple $(\mathcal{B}_1,f,\mathcal{B}_2)$ such that $\mathcal{B}_i\cong\mathcal{B}$ for $i=1,2$ and $\mathcal{B}_1\cong_f\mathcal{B}_2$ to a function $g$ such that $\Phi(\mathcal{B}_1)\cong_g\Phi(\mathcal{B}_2)$.  Moreover, $\Psi$ preserves identity and composition.  

\end{enumerate}

\end{defn}

Harrison-Trainor, Melnikov, Miller, and Montalb\'{a}n \cite{HTM^3} proved the following.  

\begin{thm}
\label{HTM^3}

For a pair of structures $\mathcal{A}$ and $\mathcal{B}$, the following are equivalent:

\begin{enumerate}

\item  $\mathcal{A}$ is effectively interpreted in $\mathcal{B}$,

\item   there is a computable functor from $\mathcal{B}$ to $\mathcal{A}$. 

\end{enumerate}

\end{thm}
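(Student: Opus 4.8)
This is an equivalence, and the two implications are of quite different character: $(1)\Rightarrow(2)$ is a routine unwinding of definitions, while $(2)\Rightarrow(1)$ is where the work lies. For $(1)\Rightarrow(2)$, suppose $\mathcal{A}$ is effectively interpreted in $\mathcal{B}$ via a set $D\subseteq\mathcal{B}^{<\omega}$, an equivalence relation $\sim$, and relations $R_i^*$, all defined by generalized computable $\Sigma_1$ formulas without parameters. Given a copy $\hat{\mathcal{B}}$ of $\mathcal{B}$, presented by its atomic diagram, the relations $D$, $\sim$, $R_i^*$ on tuples of $\hat{\mathcal{B}}$ are uniformly c.e.\ in $\hat{\mathcal{B}}$, so $\Phi$ can enumerate the tuples of $D$, watch the $\sim$-facts appear, and build the quotient $(D,(R_i^*))/_\sim$ on an initial segment of $\omega$, outputting its atomic diagram; by clause (1) of the interpretation this is a copy of $\mathcal{A}$. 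For $\Psi$: an isomorphism $f\colon\hat{\mathcal{B}}_1\to\hat{\mathcal{B}}_2$ acts coordinatewise on tuples, and since any $L_{\omega_1\omega}$ formula is isomorphism-invariant, $f$ carries $D$, $\pm\sim$, and $\pm R_i^*$ over $\hat{\mathcal{B}}_1$ to the corresponding relations over $\hat{\mathcal{B}}_2$; hence $f$ descends to a bijection of the two quotients respecting each $R_i^*$, that is, an isomorphism $g\colon\Phi(\hat{\mathcal{B}}_1)\to\Phi(\hat{\mathcal{B}}_2)$. One computes $g(k)$ by finding a tuple in the $k$-th enumerated $\sim$-class in $\hat{\mathcal{B}}_1$, applying $f$, and locating the resulting class in $\hat{\mathcal{B}}_2$; this is uniform, and the $\Psi$ so defined visibly sends identity maps to identity maps and respects composition.

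For $(2)\Rightarrow(1)$, suppose $(\Phi,\Psi)$ is a computable functor from $\mathcal{B}$ to $\mathcal{A}$. The plan is to extract an interpretation from the behaviour of $\Phi$ on copies of $\mathcal{B}$, using $\Psi$ to make that extraction coherent. Since $\Phi$ is a Turing operator, for any copy $\hat{\mathcal{B}}$ and any $k\in\operatorname{dom}\Phi(\hat{\mathcal{B}})$, the membership of $k$ in the domain together with all atomic facts about $k$ in $\Phi(\hat{\mathcal{B}})$ are forced by a finite piece of the atomic diagram of $\hat{\mathcal{B}}$ --- equivalently, by the atomic type of some tuple $\bar{b}$ from $\hat{\mathcal{B}}$, once the finitely many elements involved are relabelled to form an initial segment of the domain. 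Call such a $\bar{b}$ a \emph{name}, put it in $D$, and say it names the element so produced. The functor $\Psi$ supplies the key coherence: if two names $\bar{b},\bar{c}$ arise from copies that are isomorphic via a map taking $\bar{b}$ to $\bar{c}$, then the isomorphism returned by $\Psi$ sends the element named by $\bar{b}$ to the element named by $\bar{c}$. So define $\bar{b}\sim\bar{c}$ to hold exactly when such a configuration exists, and define the $R_i^*$ on names by asking that the named elements satisfy $R_i$; then $(D,(R_i^*))/_\sim\cong\mathcal{A}$. Each of $D$, $\sim$, and $R_i^*$ is $\Sigma_1$, since each asserts the existence of suitable convergent finite computations of $\Phi$ (and of $\Psi$ on a finite match). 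The point needing genuine care is that the complements $\not\sim$ and $\neg R_i^*$ are \emph{also} $\Sigma_1$: this rests on the observation that any finite tuple sits, after relabelling, inside some genuine copy of $\mathcal{B}$, so a failure of $\sim$ or of $R_i^*$ is again witnessed by a finite amount of data about $\Phi$ and $\Psi$ on appropriate copies. Finally, one checks that the resulting sequences of formulas are computable and uniform in their arities, yielding generalized computable $\Sigma_1$ definitions with no parameters.

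I expect the main obstacle to be exactly this last complexity point in $(2)\Rightarrow(1)$: arranging that $\pm\sim$ and each $\pm R_i^*$ are simultaneously $\Sigma_1$, rather than merely $\Sigma_1$ versus $\Pi_1$. This is where the full functoriality of $\Psi$ --- compatibility with \emph{all} isomorphisms, not just automorphisms, together with preservation of identity and composition --- is indispensable: it is what prevents the name of an element from depending on the copy of $\mathcal{B}$ used to compute it, and it is what lets the witnessing computations always be completed inside honest copies of $\mathcal{B}$. The remaining obligations --- functoriality of $(\Phi,\Psi)$ in the forward direction and the isomorphism $(D,(R_i^*))/_\sim\cong\mathcal{A}$ in the backward direction --- are bookkeeping.
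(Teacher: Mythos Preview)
The paper does not prove this theorem at all: it is stated as a result of Harrison-Trainor, Melnikov, Miller, and Montalb\'{a}n and cited from \cite{HTM^3}, with no argument given here. So there is no ``paper's own proof'' to compare your proposal against; the paper simply imports the theorem and then uses it (and its uniform version, Theorem~\ref{uniform}) as a black box.

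That said, your sketch is broadly in line with the argument in \cite{HTM^3}. The forward direction $(1)\Rightarrow(2)$ is indeed routine, and your outline of it is fine. For $(2)\Rightarrow(1)$ you have the right architecture---tuples as names via finite use of $\Phi$, coherence supplied by $\Psi$---but two points are more delicate than your sketch suggests. First, the equivalence $\sim$ cannot simply be ``some isomorphism of copies carries $\bar b$ to $\bar c$'': one must tie the names to specific computations of $\Phi$ and then use $\Psi$ on partial isomorphisms extending to genuine copies to compare the outputs; getting both $\sim$ and $\not\sim$ to be $\Sigma_1$ requires that for \emph{every} extension to a copy the answer is the same, and this is exactly where preservation of identity and composition by $\Psi$ is invoked. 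Second, the claim that ``any finite tuple sits, after relabelling, inside some genuine copy of $\mathcal{B}$'' is doing real work and needs the domain assumption (universe $\subseteq\omega$) together with a careful relabelling argument; this is also where the need for tuples of arbitrary arity in $D$ becomes unavoidable, as the Remarks following the theorem in the paper emphasize. Your proposal flags these as the hard spots, which is accurate, but a full proof has to discharge them rather than gesture at them.
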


\noindent
\textbf{Remarks}:  In the proof of Theorem \ref{HTM^3}, it is important that $D$ consist of tuples of arbitrary arity.  Proposition \ref{computable} said that a computable structure $\mathcal{A}$ can be effectively interpreted in an arbitrary structure $\mathcal{B}$.  We proved this by a direct construction, in which $D$ was the set of all tuples from $\mathcal{B}$.  There is an alternative proof of Proposition \ref{computable}, using Theorem \ref{HTM^3}.  We define a computable functor $\Phi,\Psi$ from $\mathcal{B}$ to $\mathcal{A}$ in which $\Phi$ ignores the oracle and simply computes $\A$, while $\Psi$ always computes the identity function.    

\bigskip

We are interested in uniform effective interpretations and uniform computable functors.   

\begin{defn}

Suppose $K\leq_{tc} K'$ via $\Theta$.  The structures in $K$ are \emph{uniformly effectively interpreted} in their $\Theta$-images if there is a fixed collection of generalized computable $\Sigma_1$ formulas (without parameters) that, for all $\mathcal{A}\in K$, define an interpretation of $\mathcal{A}$ in $\Theta(\mathcal{A})$.  

\end{defn}

\begin{defn}
  
Suppose $K\leq_{tc} K'$ via $\Theta$.  Turing operators $\Phi$ and $\Psi$ form a \emph{uniform computable functor} from the structures in the range of $\Theta$ to their pre-images provided that for all $\mathcal{A}\in K$, $\Phi$ and $\Psi$ serve as a computable functor from $\Theta(\mathcal{A})$ to $\mathcal{A}$.  

\end{defn}

There is a uniform version of Theorem \ref{HTM^3}.  

\begin{thm}
\label{uniform}

For classes $K,K'$ with $K\leq_{tc} K'$ via $\Theta$, the following are equivalent:

\begin{enumerate}

\item  there is a uniform effective interpretation of the structures $\mathcal{A}\in K$ in the corresponding structures $\Theta(\mathcal{A})$, 

\item  there is a uniform computable functor $\Phi,\Psi$ from the structures $\Theta(\mathcal{A})$ in the range of $\Theta$ to their pre-images $\mathcal{A}$.  

\end{enumerate}

\end{thm}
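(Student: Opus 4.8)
The plan is to obtain both implications by checking that the two constructions in the proof of Theorem~\ref{HTM^3} are \emph{uniform in their data}. The passage from an effective interpretation to a computable functor inspects only the interpreting formulas, and the passage from a computable functor to an effective interpretation inspects only the pair of Turing operators $\Phi,\Psi$; neither construction looks at the particular structures beyond what the formulas (respectively the operators) already encode. Consequently, if a single collection of formulas serves for all $\A\in K$ via $\Theta$, then the functor it produces serves for all of them simultaneously, and symmetrically in the other direction.

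For $(1)\Rightarrow(2)$, suppose there is a fixed collection $\mathcal{F}$ of generalized computable $\Sigma_1$ formulas, without parameters, that for every $\A\in K$ defines an interpretation of $\A$ in $\Theta(\A)$. Let $\Phi$ be the Turing operator that, given a copy of a structure $\B$, uses the formulas of $\mathcal{F}$ to enumerate the domain $D\subseteq\B^{<\omega}$, the complementary pair $\pm\sim$, and each complementary pair $\pm R_i^*$, and thereby outputs a copy of the quotient $(D,(R_i^*)_{i\in I})/_\sim$. Let $\Psi$ be the operator that, given $(\B_1,f,\B_2)$ with $\B_1\cong_f\B_2$, returns the map induced by $f$ on tuples, descended to $\sim$-classes; this is well defined and an isomorphism of the quotients because $f$ preserves all $\Sigma_1$ formulas, hence preserves $D$, $\sim$ and each $R_i^*$, and $\Psi$ plainly respects identity and composition. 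Both $\Phi$ and $\Psi$ depend only on $\mathcal{F}$. Since $\mathcal{F}$ interprets $\A$ in $\Theta(\A)$ for each $\A\in K$, the pair $\Phi,\Psi$ is a computable functor from $\Theta(\A)$ to $\A$ for every such $\A$; that is, a uniform computable functor.

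For $(2)\Rightarrow(1)$, suppose $\Phi,\Psi$ form a uniform computable functor from the $\Theta$-images to their pre-images. We reuse the construction from the proof of Theorem~\ref{HTM^3}, which, from $\Phi,\Psi$ alone, extracts generalized computable $\Sigma_1$ formulas with no parameters whose interpretation of a structure $\B$ is isomorphic to $\Phi(\B)$ whenever $\Phi,\Psi$ is a computable functor from $\B$ to $\Phi(\B)$. In outline: the domain $D$ consists of tuples $\bar{b}$ from $\B$ together with witnessing data that $\Phi$, run on a copy of $\B$ carrying $\bar{b}$ in a fixed initial segment, has committed a particular element of its output to $\bar{b}$; the relation $\sim$ records that such finite computations, together with finite fragments of isomorphisms processed through $\Psi$ and the attendant consistency checks, force two such tuples to name the same element of $\Phi(\B)$, and each $R_i^*$ records that they are forced into $R_i$. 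Since $\Phi$ and $\Psi$ are Turing operators, every such condition is witnessed by a finite fragment of the input, so the defining formulas are computable $\Sigma_1$, and each comes paired with a complementary one of the same form, supplying the required generalized $\Delta_1$ definitions of $\pm\sim$ and each $\pm R_i^*$. This collection of formulas is read off from $\Phi,\Psi$ and nothing else; applied to $\B=\Theta(\A)$ for $\A\in K$ it yields an interpretation of $\A$ in $\Theta(\A)$, because there $\Phi,\Psi$ is by hypothesis a computable functor and $\Phi(\Theta(\A))\cong\A$. Hence the formulas give a uniform effective interpretation.

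The main obstacle is the bookkeeping underlying $(2)\Rightarrow(1)$: one must confirm that the correctness argument in the proof of Theorem~\ref{HTM^3} --- that the extracted formulas genuinely define a set $D$ with relations $\sim$ and $R_i^*$ whose quotient is isomorphic to $\Phi(\B)$ --- invokes only the functor axioms for the single pair $(\B,\Phi(\B))$, and never any ambient property of the class $K$ or of the embedding $\Theta$. Since that is exactly how the non-uniform proof proceeds, once this parametricity is verified the uniformity of both directions is immediate from the observations above.
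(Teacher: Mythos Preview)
The paper does not supply a proof of this theorem at all: it states it as the evident uniform version of Theorem~\ref{HTM^3} and moves on. Your proposal makes explicit precisely the observation the paper leaves implicit, namely that both constructions in the proof of Theorem~\ref{HTM^3} depend only on the interpreting formulas (for $(1)\Rightarrow(2)$) or on the pair of operators $\Phi,\Psi$ (for $(2)\Rightarrow(1)$), and hence apply uniformly across all $\A\in K$; this is correct and is the intended argument.
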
     

It is natural to ask whether, when $\mathcal{A}\leq_s\mathcal{B}$, there must be an effective interpretation of 
$\mathcal{A}$ in $\mathcal{B}$.  It is also natural to ask whether, when $\mathcal{A}$ is effectively interpreted in 
$(\mathcal{B},\bar{b})$ with parameters $\bar{b}$, it must be effectively interpreted in $\mathcal{B}$ without parameters.  Kalimullin \cite{Kalimullin} gave examples providing negative answers to both questions. 

\bigskip

Maltsev defined a Turing computable embedding of fields in $2$-step nilpotent groups.  The embedding takes each field $F$ to its \emph{Heisenberg group} $H(F)$.  To show that the embedding preserves isomorphism, Maltsev gave uniform existential formulas defining a copy of $F$ in $H(F)$.  The definitions involved a pair of parameters, whose orbit is defined by an existential formula (in fact, the formula is quantifier-free).  In Section \ref{sec:Maltsev}, we recall Maltsev's definitions.  In Section \ref{sec:Morozov}, we describe a uniform computable functor that, for all $F$, takes copies of $H(F)$, with their isomorphisms, to copies of $F$, with corresponding isomorphisms.  By Theorem~\ref{uniform}, it follows that there is a uniform effective interpretation of $F$ in $H(F)$ with no parameters.  In Section \ref{sec:finitary}, we give explicit finitary existential formulas that define such an interpretation, and also show that parameter-free interpretations necessarily involve an equivalence relation $\sim$ distinct from equality.  (Thus, while one can interpret $F$ in $H(F)$ without parameters, one cannot define $F$ in $H(F)$ without parameters.)  In Section \ref{sec:biinterpretability}, we note that although $F$ is effectively interpretable in $H(F)$ and $H(F)$ is effectively interpretable in $F$, we do not, in general, have effective bi-interpretability.  In Section \ref{sec:generalize}, we generalize our process of passing from Maltsev's definition, with parameters, to the uniform effective interpretation, with no parameters.

\section{Defining $F$ in $H(F)$}
\label{sec:Maltsev}

In this section, we recall Maltsev's embedding of fields in $2$-step nilpotent groups, and his formulas that define a copy of the field in the group.  Recall that for a field $F$, the Heisenberg group $H(F)$ is the set of matrices of the form
\[h(a,b,c) = \left[\begin{array}{ccc}
1 & a & c\\
0 & 1 & b\\
0 & 0 & 1
\end{array}\right]\]
with entries in $F$.  Note that $h(0,0,0)$ is the identity matrix.  We are interested in non-commuting pairs in $H(F)$.  One such pair is $(h(1,0,0),h(0,1,0))$.  For $u = h(u_1,u_2,u_3)$ and $v = h(v_1,v_2,v_3)$, let 
\[\Delta_{(u,v)} = \left|\begin{array}{cc}
u_1 & v_1\\
u_2 & v_2
\end{array}
\right|.\ \]

For a group $G$, we write $Z(G)$ for the center.  
For group elements $x,y$, the \emph{commutator} is $[x,y] = x^{-1}y^{-1}xy$.  The following technical lemma provides much of the information we need to show that $F$ is defined, with parameters, in $H(F)$.                

\begin{lem}\
\label{lemma:firstmain}

\begin{enumerate}

\item  \begin{enumerate}

\item  For $u$ and $v$, the commutator, $[u,v]$, is $h(0,0,\Delta_{(u,v)})$, and 

\item  $[u,v] = 1$ iff $\Delta_{(u,v)} = 0$. 

\end{enumerate}

\item  Let $u = h(u_1,u_2,u_3)$, and let $v = h(v_1,v_2,v_3)$.  If $\left[\begin{array}{c} u_1\\ u_2\end{array}\right] = \left[\begin{array}{c}
0\\ 0\end{array}\right]$, then $u\in Z(H(F))$.  If $\left[\begin{array}{c} u_1\\ u_2\end{array}\right]\not=\left[\begin{array}{c} 0\\ 0\end{array}\right]$, then $[u,v] = 1$ iff there exists $\alpha$ such that $\left[\begin{array}{c} v_1\\ v_2\end{array}\right] = 
\alpha\cdot\left[\begin{array}{c} u_1\\ u_2\end{array}\right]$.

\item  The center $Z(H(F))$ consists of the elements of the form $h(0,0,c)$.

\item  If $[u,v]\not= 1$, then $x\in Z(H(F))$ iff $[x,u] = [x,v] = 1$.

\end{enumerate} 

\end{lem}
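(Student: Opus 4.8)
The plan is to reduce everything to two elementary computations: the multiplication rule in $H(F)$ and the commutator formula it yields. First I would record that multiplying the two upper-triangular matrices gives $h(a,b,c)\,h(a',b',c') = h(a+a',\,b+b',\,c+c'+ab')$, and hence $h(a,b,c)^{-1} = h(-a,-b,\,-c+ab)$. From these, a short calculation of $u^{-1}v^{-1}uv$ — using that $uv$ and $vu$ agree in their first two coordinates, say both equal to $(u_1+v_1, u_2+v_2)$ in positions $1,2$ — shows that $[u,v] = h(0,0,\,u_1v_2 - u_2v_1) = h(0,0,\Delta_{(u,v)})$, which is part 1(a). Part 1(b) is then immediate, since $h(0,0,c)$ is the identity matrix exactly when $c = 0$.

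For part 2, if $(u_1,u_2) = (0,0)$ then $\Delta_{(u,v)} = 0$ for every $v$, so by part 1(b), $u$ commutes with every element of $H(F)$, i.e.\ $u \in Z(H(F))$. If $(u_1,u_2)\neq(0,0)$, then by part 1(b) we have $[u,v] = 1$ iff $u_1v_2 - u_2v_1 = 0$, i.e.\ iff the vectors $(u_1,u_2)$ and $(v_1,v_2)$ of $F^2$ are linearly dependent; since $(u_1,u_2)$ is nonzero, this holds iff $(v_1,v_2) = \alpha\cdot(u_1,u_2)$ for some $\alpha \in F$. The forward direction of this last equivalence is where being a field rather than just a ring matters: one solves $\alpha = v_1/u_1$ (if $u_1\neq 0$) or $\alpha = v_2/u_2$ (if $u_2\neq 0$).

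For part 3, the inclusion $\{h(0,0,c) : c \in F\} \subseteq Z(H(F))$ is exactly the first case of part 2. For the converse, given $u \in Z(H(F))$, I would show $(u_1,u_2) = (0,0)$ by contradiction: if $u_1 \neq 0$ take $v = h(0,1,0)$, and if $u_2 \neq 0$ take $v = h(1,0,0)$; in either case $\Delta_{(u,v)} \neq 0$, so $[u,v]\neq 1$ by part 1(b), contradicting centrality. Hence $u = h(0,0,u_3)$. For part 4, one direction is trivial. For the other, suppose $[u,v]\neq 1$ and $[x,u] = [x,v] = 1$; then by part 1, $x_1u_2 - x_2u_1 = 0$ and $x_1v_2 - x_2v_1 = 0$, a homogeneous linear system in $(x_1,x_2)$ whose coefficient matrix has determinant $u_1v_2 - u_2v_1 = \Delta_{(u,v)} \neq 0$, forcing $(x_1,x_2) = (0,0)$; so $x \in Z(H(F))$ by part 3.

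The argument is essentially routine, so I do not anticipate a genuine obstacle. The one place that needs care is the commutator computation in part 1(a): one must track the asymmetry of the multiplication rule (the $ab'$ term) so that the central entry comes out as $\Delta_{(u,v)} = u_1v_2 - u_2v_1$ and not its negative, since this sign convention propagates through parts 2–4. Beyond that, the only conceptual ingredient is the repeated use of the fact that over a field a $2\times2$ determinant vanishes precisely when its columns are proportional, which enters parts 2 and 4.
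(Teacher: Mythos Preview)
Your proof is correct and follows essentially the same route as the paper: Part~1 by direct computation, Parts~2 and~3 as immediate consequences, and Part~4 by linear algebra over~$F$. The only minor difference is in Part~4: the paper invokes Part~2 to write $(x_1,x_2)$ as a scalar multiple of each of $(u_1,u_2)$ and $(v_1,v_2)$ and then uses their linear independence, whereas you set up the homogeneous $2\times 2$ system directly and use that its determinant is $\Delta_{(u,v)}\neq 0$; these are the same argument in different clothing.
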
 

\begin{proof}

For Part 1, (a) is proved by direct computation, and (b) follows from (a).  Parts 2 and 3 are easy consequences of Part 1.  We prove Part 4.  Suppose $[u,v]\not= 1$.  If $x\in Z(H(F))$, then it commutes with both $u$ and $v$.  We must show that if $x$ commutes with both $u$ and $v$, then $x\in Z(H(F))$.  Let $u = h(u_1,u_2,u_3)$, $v = h(v_1,v_2,v_3)$, and $x = h(x_1,x_2,x_3)$.  By Part 2, since $[x,u] = 1$, there exists $\alpha$ such that $\left[
\begin{array}{c}
x_1\\
x_2\end{array}\right] = \alpha\left[\begin{array}{c}
u_1\\
u_2
\end{array}
\right]$.  Similarly, since $[x,v] = 1$, there exists $\beta$ such that 
$\left[
\begin{array}{c}
x_1\\
x_2\end{array}\right] = \beta\left[\begin{array}{c}
v_1\\
v_2
\end{array}
\right]$.  Since the vectors $\left[
\begin{array}{c}
u_1\\
u_2\end{array}\right]$ and $\left[\begin{array}{c}
v_1\\
v_2
\end{array}
\right]$, are linearly independent, this implies that $\alpha = \beta = 0$.  It follows that $x_1 = x_2 = 0$, so 
$x\in Z(H)$.        
\end{proof}

\begin{cor}
\label{cor:orbits}
If $x\in H(F)$ is fixed by all automorphisms of $H(F)$, then $x=1$.
%
\end{cor}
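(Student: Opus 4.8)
The plan is to exploit the structural facts from Lemma~\ref{lemma:firstmain}, especially the description of the center in Part~3, together with the homogeneity of $H(F)$ under multiplication by scalars and the fact that the field $F$ has nontrivial automorphisms acting on $H(F)$. First I would reduce to the case $x \in Z(H(F))$: if $x = h(a,b,c)$ with $(a,b) \neq (0,0)$, then I would produce an automorphism of $H(F)$ moving $x$. The natural candidates are the automorphisms induced by field automorphisms of $F$ (applying a field automorphism $\sigma$ entrywise to a matrix $h(a,b,c)$ gives an automorphism of $H(F)$), and the automorphisms induced by conjugation, i.e., inner automorphisms. By Part~1(a), conjugating $u$ by $v$ changes the top-right entry by $\Delta_{(u,v)}$ while leaving the $(1,2)$ and $(2,3)$ entries fixed; so inner automorphisms alone fix $(a,b)$ and therefore cannot move $x$ out of its "shadow" $(a,b)$ in the abelianization. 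Hence to move an element with $(a,b)\neq(0,0)$ I need genuinely outer automorphisms.

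The cleanest uniform choice is the family of automorphisms of $H(F)$ arising from $GL_2(F)$ acting on the "abelian part": a matrix $g \in GL_2(F)$ (or at least the scalar matrices $\lambda I$, $\lambda \in F^\times$, $\lambda \neq 1$, which exist whenever $F \neq \mathbb{F}_2$) induces an automorphism sending $h(a,b,c)$ to roughly $h(\lambda a, \lambda b, \lambda^2 c)$ — one checks this respects the group law because the commutator entry scales by $\det(\lambda I) = \lambda^2$, matching $\Delta$. Such an automorphism moves any $x = h(a,b,c)$ with $(a,b)\neq(0,0)$, since $(\lambda a, \lambda b) \neq (a,b)$. This disposes of all non-central $x$, leaving only $x = h(0,0,c) \in Z(H(F))$ to handle.

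For $x = h(0,0,c)$ with $c \neq 0$, the scalar automorphism above sends it to $h(0,0,\lambda^2 c)$, which differs from $x$ as long as $\lambda^2 \neq 1$, i.e., as long as $F$ has an element of square $\neq 1$. This covers every field except possibly $\mathbb{F}_2$ and $\mathbb{F}_3$ (and more generally fields where $x^2 = 1$ has only the solution $x=1$, which by factoring $x^2-1$ means $F$ has characteristic $2$). In characteristic $2$ one instead uses additive automorphisms: the map $h(a,b,c) \mapsto h(a,b,c + ab)$ or, better, one among the transvection-type automorphisms, together with automorphisms coming from the additive group action $h(a,b,c)\mapsto h(a, b, c + \phi(a,b))$ for a suitable biadditive $\phi$ — but these fix the center pointwise, so they do not help. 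The genuinely different tool for the center is: any field automorphism $\sigma$ sends $h(0,0,c)$ to $h(0,0,\sigma(c))$, so if $c$ is not fixed by all field automorphisms we are done; and if $F$ is rigid as a field (e.g.\ a prime field), one observes that $Z(H(F)) \cong (F,+)$ as a group with a natural action, and any element of $(F,+)$ other than $0$ can be moved by a group automorphism of $H(F)$ extending an automorphism of $(F,+)$ — concretely the scalar map again handles char $0$ and char $p$ odd, and for char $2$ we note $\mathbb{F}_2$ has $H(\mathbb{F}_2)$ of order $8$ with center of order $2$, where the nontrivial central element is in fact fixed by every automorphism, so the statement would fail there unless — so I should double-check the intended hypothesis.

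The main obstacle is therefore the small-field case: for $F = \mathbb{F}_2$ the center of $H(F)$ has order $2$ and its nonidentity element is fixed by $\mathrm{Aut}(H(F))$, so either the corollary implicitly assumes $F$ is infinite (or at least not $\mathbb{F}_2$), or it uses a finer argument I am missing; I expect the proof in the paper to restrict attention to the fields that actually arise, or to phrase the conclusion so that the problematic central element is identified with $1$ under the interpretation's $\sim$. Modulo that caveat, the argument is a short case split — non-central $x$ killed by a $GL_2$-scalar automorphism, central $x = h(0,0,c)$ killed by a scalar automorphism (using $\lambda^2 \neq 1$) or by a field automorphism — each case needing only the routine verification that the proposed map is an automorphism of $H(F)$, which follows from Part~1(a) of Lemma~\ref{lemma:firstmain}.
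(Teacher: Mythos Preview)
Your reduction to the central case takes a wrong turn. You correctly observe that conjugation by $v$ sends $x=h(a,b,c)$ to $h(a,b,c+\Delta_{(x,v)})$, leaving $(a,b)$ fixed, but from this you conclude that inner automorphisms cannot move a non-central $x$ and that outer automorphisms are required. That does not follow: to move $x$ we only need $c+\Delta_{(x,v)}\neq c$, not a change in $(a,b)$. If $(a,b)\neq(0,0)$ then some $v$ has $\Delta_{(x,v)}\neq 0$, and conjugation by that $v$ already moves $x$. This is exactly the paper's one-line reduction: since every conjugation fixes $x$, we have $x\in Z(H(F))$, hence $a=b=0$ by Lemma~\ref{lemma:firstmain}(3). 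No outer automorphisms, no restriction on $F$. Your scalar map would also work here when $|F|>2$, but it is unnecessary.

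For the remaining case $x=h(0,0,c)$ the paper is again more economical than your scalar and field-automorphism casework: it exhibits the single involution $h(x,y,z)\mapsto h(y,x,xy-z)$ and notes that it sends $h(0,0,c)$ to $h(0,0,-c)$, forcing $c=0$. In particular this covers $\mathbb{F}_3$, which your scalar argument misses since every $\lambda\in\mathbb{F}_3^\times$ has $\lambda^2=1$.

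Your suspicion about characteristic~$2$ is, however, on target. The paper's involution fixes $h(0,0,c)$ whenever $\operatorname{char}(F)=2$, and for $F=\mathbb{F}_2$ the nonidentity central element of $H(\mathbb{F}_2)$ (a nonabelian group of order~$8$ with center of order~$2$) is genuinely fixed by every automorphism, so the corollary fails there as stated. The paper does not flag this; your instinct to question the small-field case is correct, even though your own remedies do not close the gap either.
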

\begin{proof}
Write $x=h(a,b,c)$.  Lemma \ref{lemma:firstmain}(3) shows $a=b=0$,
since all conjugations fix $x$.  But the automorphism
of $H(F)$ mapping $h(x,y,z)$ to $h(y,x, xy-z)$, which interchanges $h(1,0,0)$
with $h(0,1,0)$, maps $h(0,0,c)$ to $h(0,0,-c)$, hence shows that $c=0$ as well.
\end{proof}

The next lemma tells us how, for any non-commuting pair $u,v$ in the group
$(H(F),*)$, we can define operations $+$ and 
$\cdot$, and an isomorphism $f$ from $F$ to $(Z(H(F)),+,\cdot)$.      

\begin{lem}
\label{MainLemma}

Let $u = h(u_1,u_2,u_3)$ and $v = h(v_1,v_2,v_3)$ be a non-commuting pair. Assume that  $\alpha,\beta,\gamma\in F$. Let 
$x = h(0,0,\alpha\cdot\Delta_{(u,v)})$, $y = h(0,0,\beta\cdot\Delta_{(u,v)})$, and 
$z = h(0,0,\gamma\cdot\Delta_{(u,v)})$.  Then 

\begin{enumerate}

\item  $\alpha + \beta = \gamma$ iff $x*y = z$, where $*$ is the matrix multiplication.

\item  $\alpha\cdot\beta = \gamma$ iff there exist $x'$ and $y'$ such that $[x',u] = [y',v] = 1$, 
$[u,y'] = y$, $[x',v] = x$, and $z = [x',y']$.

\end{enumerate}

\end{lem}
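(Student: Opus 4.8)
The plan is to reduce everything to the single identity $[p,q]=h(0,0,\Delta_{(p,q)})$ from Lemma~\ref{lemma:firstmain}(1a), together with two elementary facts: the map $h(0,0,c)\mapsto c$ carries $(Z(H(F)),*)$ isomorphically onto $(F,+)$, and $\Delta_{(\cdot,\cdot)}$ is bilinear and alternating in the ``top vectors'' $(p_1,p_2)$ of its arguments. Write $\Delta:=\Delta_{(u,v)}$. Since $(u,v)$ is non-commuting, Lemma~\ref{lemma:firstmain}(1b) gives $\Delta\neq 0$, and in particular neither $(u_1,u_2)$ nor $(v_1,v_2)$ is the zero vector.

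Part 1 is an immediate computation: $h(0,0,p)*h(0,0,q)=h(0,0,p+q)$, so $x*y=h(0,0,(\alpha+\beta)\Delta)$, which equals $z=h(0,0,\gamma\Delta)$ iff $(\alpha+\beta)\Delta=\gamma\Delta$, iff $\alpha+\beta=\gamma$ after cancelling $\Delta\neq 0$.

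For Part 2 I would prove both directions by pinning down the admissible $x',y'$. For the forward direction, assume $\alpha\beta=\gamma$ and set $x'=h(\alpha u_1,\alpha u_2,0)$ and $y'=h(\beta v_1,\beta v_2,0)$. Then $\Delta_{(x',u)}=\alpha\,\Delta_{(u,u)}=0$ and $\Delta_{(y',v)}=\beta\,\Delta_{(v,v)}=0$, so $[x',u]=[y',v]=1$; bilinearity then gives $\Delta_{(u,y')}=\beta\Delta$, $\Delta_{(x',v)}=\alpha\Delta$, and $\Delta_{(x',y')}=\alpha\beta\Delta=\gamma\Delta$, i.e.\ $[u,y']=y$, $[x',v]=x$, and $[x',y']=z$, as required. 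For the converse, suppose such $x',y'$ exist. Since $(u_1,u_2)\neq(0,0)$, Lemma~\ref{lemma:firstmain}(2) applied to $[x',u]=1$ shows the top vector of $x'$ is $s\,(u_1,u_2)$ for some $s\in F$; then $[x',v]=h(0,0,s\Delta)$, and equating this with $x=h(0,0,\alpha\Delta)$ forces $s=\alpha$. Symmetrically, $[y',v]=1$ makes the top vector of $y'$ equal to $t\,(v_1,v_2)$, and $[u,y']=h(0,0,t\Delta)=y=h(0,0,\beta\Delta)$ forces $t=\beta$. Finally $z=[x',y']=h(0,0,\Delta_{(x',y')})=h(0,0,\alpha\beta\Delta)$, and comparing with $h(0,0,\gamma\Delta)$ yields $\alpha\beta=\gamma$. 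Note that the third coordinates of $x'$ and $y'$ are irrelevant throughout, since they do not affect any $\Delta_{(\cdot,\cdot)}$.

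The only place needing care --- the ``main obstacle,'' such as it is --- is the converse of Part 2: one uses the non-commutativity of $(u,v)$ twice, once to invoke the centralizer classification of Lemma~\ref{lemma:firstmain}(2) and once to cancel the nonzero scalar $\Delta$, and one must check that the scalars $s$ and $t$ extracted from the commutator conditions are genuinely forced rather than merely possible. Everything else is routine $3\times 3$ matrix arithmetic.
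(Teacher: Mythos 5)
Your proof is correct and follows essentially the same route as the paper's: direct computation for Part~1, the explicit witnesses $x'=h(\alpha u_1,\alpha u_2,0)$, $y'=h(\beta v_1,\beta v_2,0)$ for the forward direction of Part~2, and bilinearity of $\Delta$ together with Lemma~\ref{lemma:firstmain}(2) for the converse. If anything, your converse is written a bit more carefully than the paper's: the paper jumps straight from $\Delta_{(x',v)}=\alpha\Delta_{(u,v)}$ to $(x'_1,x'_2)=\alpha(u_1,u_2)$, tacitly relying on $[x',u]=1$ to know the top vector of $x'$ is a scalar multiple of that of $u$, whereas you spell out that intermediate step explicitly.
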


\begin{proof}

For Part 1, matrix multiplication yields the fact that \[h(0,0,a)*h(0,0,b) = h(0,0,a+b)\ .\]  
Then $\alpha + \beta = \gamma$ iff 
\[x*y = h(0,0,\alpha\cdot \Delta_{(u,v)})*h(0,0,\beta\cdot\Delta_{(u,v)}) = h(0,0,\gamma\cdot\Delta_{(u,v)}) = z\ .\]  
For Part 2, first suppose that $\alpha\cdot\beta = \gamma$.  We take $x' = h(\alpha\cdot u_1, \alpha\cdot u_2,0)$, and 
$y' = h(\beta\cdot v_1, \beta\cdot v_2,0)$.  Then $\Delta_{(x',u)} = 0$, so $[x',u] = h(0,0,0) = 1$.  
Similarly, $[y',v] = 1$.  Also, $\Delta_{(x',v)} = \alpha\cdot \Delta_{(u,v)}$, so $[x',v] = h(0,0,\alpha\cdot\Delta_{(u,v)}) = x$.  
Similarly, $\Delta_{(u,y')} = \beta\cdot\Delta_{(u,v)}$, so $[u,y'] = h(0,0,\beta\cdot\Delta_{(u,v)}) = y$.  Finally, 
$\Delta_{(x',y')} = \alpha\cdot\beta\cdot\Delta_{(u,v)} = \gamma\cdot\Delta_{(u,v)}$, so $[x',y'] = h(0,0,\gamma\cdot\Delta_{(u,v)}) = z$.
  
Now, suppose we have $x'$ and $y'$ such that $[x',u] = [y',v] = 1$, $[u,y'] = y$, 
$[x',v] = x$, and $[x',y'] = z$.  Say that $x' = h(x'_1,x'_2,x'_3)$ and $y' = h(y'_1,y'_2,y'_3)$.  Since $[x',v] = x$, $\Delta_{(x',v)} = \alpha\cdot\Delta_{(u,v)}$, so $\left[\begin{array}{c}
x'_1\\  x'_2\end{array}\right] = \alpha\left[\begin{array}{c} u_1\\  u_2\end{array}\right]$.  
Since $[u,y'] = y$, $\Delta_{(u,y')} = \beta\cdot\Delta_{(u,v)}$, so $\left[\begin{array}{c}
y'_1\\ y'_2\end{array}\right] = 
\beta\left[\begin{array}{c} v_1\\ v_2\end{array}\right]$.  
Combining these facts, we see that $\Delta_{(x',y')} = \left|\begin{array}{cc} x'_1 & y'_1\\  x'_2 & y'_2\end{array}\right| = \left|\begin{array}{cc} \alpha\cdot u_1 & \beta\cdot v_1\\  \alpha\cdot u_2 & \beta\cdot v_2\end{array}\right| = \alpha\cdot\beta\cdot\Delta_{(u,v)}$.  Since $[x',y'] = z$, $\Delta_{(x',y')} = \gamma\cdot\Delta_{(u,v)}$.  Since $u$ and $v$ do not commute, $\Delta_{(u,v)}\not= 0$.  Therefore, $\alpha\cdot\beta = \gamma$.  
\end{proof}

The main result of the section follows directly from Lemmas \ref{lemma:firstmain} and \ref{MainLemma}.

\begin{thm} [Maltsev, Morozov]
\label{Mal'tsevDef}

For an arbitrary non-commuting pair $(u,v)$ in $H(F)$, we get $F_{(u,v)} = (Z(H(F)),\oplus,\otimes_{(u,v)})$ where 

\begin{enumerate} 

\item $x\in Z(H(F))$ iff $[x,u] = [x,v] = 1$, 

\item  $\oplus$ is the group operation from $H(F)$,

\item  $\otimes_{(u,v)}$ is the set of triples $(x,y,z)$ such that there exist $x',y'$ with \\
$[x',u] = [y',v] = 1$, 
$[x',v] = x$, $[u,y'] = y$, and $[x',y'] = z$,

\item  the function $g_{(u,v)}$ taking $\alpha\in F$ to $h(0,0,\alpha\cdot\Delta_{(u,v)})\in H(F)$ is an isomorphism between $F$ and $F_{(u,v)}$. 

\end{enumerate}

\end{thm}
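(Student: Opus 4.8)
The plan is to assemble Theorem \ref{Mal'tsevDef} directly from the two lemmas already proved, since each of its four clauses is a restatement of something established above. First I would fix an arbitrary non-commuting pair $(u,v)$ in $H(F)$; by Lemma \ref{lemma:firstmain}(1)(b) this is equivalent to $\Delta_{(u,v)}\neq 0$, so the scalar $\Delta_{(u,v)}$ is a unit in $F$ and the map $\alpha\mapsto h(0,0,\alpha\cdot\Delta_{(u,v)})$ is a bijection from $F$ onto the set of matrices of the form $h(0,0,c)$. By Lemma \ref{lemma:firstmain}(3) that set is exactly $Z(H(F))$, and by Lemma \ref{lemma:firstmain}(4), since $[u,v]\neq 1$, membership in $Z(H(F))$ is captured by the condition $[x,u]=[x,v]=1$; this gives clause (1). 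So $g_{(u,v)}$ is a bijection $F\to Z(H(F))$, and it remains only to check it is a homomorphism for the two operations, i.e. clause (4) relative to the operations named in (2) and (3).

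For clause (2) and the additive part of (4): writing $\alpha,\beta\in F$ and $x=g_{(u,v)}(\alpha)$, $y=g_{(u,v)}(\beta)$, Lemma \ref{MainLemma}(1) says $x*y=z$ iff $\alpha+\beta=\gamma$, where $z=g_{(u,v)}(\gamma)$; since $*$ restricted to $Z(H(F))$ is the operation $\oplus$, this says precisely that $g_{(u,v)}$ carries $+$ to $\oplus$. For clause (3) and the multiplicative part of (4): Lemma \ref{MainLemma}(2) says that for $x,y,z\in Z(H(F))$ the triple $(x,y,z)$ lies in the relation defined by ``there exist $x',y'$ with $[x',u]=[y',v]=1$, $[x',v]=x$, $[u,y']=y$, $[x',y']=z$'' exactly when $\alpha\cdot\beta=\gamma$, where $\alpha,\beta,\gamma$ are the $g_{(u,v)}$-preimages of $x,y,z$. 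This is the definition of $\otimes_{(u,v)}$ in clause (3), and it says $g_{(u,v)}$ carries $\cdot$ to $\otimes_{(u,v)}$. One small point to note explicitly is that $\otimes_{(u,v)}$ as defined is genuinely the graph of a (total, well-defined) binary operation on $Z(H(F))$: this follows because, transporting along the bijection $g_{(u,v)}$, it agrees with the graph of multiplication on $F$, which is a function.

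Combining these observations, $g_{(u,v)}$ is a bijection $F\to Z(H(F))$ that is simultaneously an isomorphism $(F,+)\to (Z(H(F)),\oplus)$ and respects the multiplicative structure, hence an isomorphism of the ring/field structures; since $F$ is a field, so is $F_{(u,v)}=(Z(H(F)),\oplus,\otimes_{(u,v)})$, and the statement follows. I do not anticipate a real obstacle here, as everything is bookkeeping on top of Lemmas \ref{lemma:firstmain} and \ref{MainLemma}; the only place requiring a moment's care is the observation just made, that the relation $\otimes_{(u,v)}$ defined by an existential formula is in fact functional, which is why one wants the ``only if'' direction of Lemma \ref{MainLemma}(2) and not merely the ``if'' direction used to exhibit witnesses $x',y'$.
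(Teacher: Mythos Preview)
Your proposal is correct and matches the paper's approach exactly: the paper states only that the theorem ``follows directly from Lemmas \ref{lemma:firstmain} and \ref{MainLemma},'' and what you have written is precisely the routine unpacking of that claim. Your added remark about why $\otimes_{(u,v)}$ is genuinely functional is a nice clarification that the paper leaves implicit.
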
 

\noindent
\textbf{Note}:  From Part 4, it is clear that $h(0,0,\Delta_{(u,v)})$ is the multiplicative identity in $F_{(u,v)}$---we may write $1_{(u,v)}$ for this element.   

\begin{prop}  

There is a uniform Medvedev reduction $\Phi$ of $F$ to $H(F)$.    

\end{prop}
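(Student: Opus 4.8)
The plan is to read $\Phi$ directly off Maltsev's definitions, in the form packaged by Theorem~\ref{Mal'tsevDef}. Suppose $\Phi$ is handed an oracle for a copy $H$ of $H(F)$, i.e.\ the characteristic function of the atomic diagram of $H$ in the language of groups. First, $\Phi$ searches through pairs of elements of $H$ until it finds a non-commuting pair $(u,v)$; the commutator $[u,v]=u^{-1}v^{-1}uv$ and the test ``$[u,v]\neq 1$'' are computable from the oracle. This search halts for every field $F$: since $0\neq 1$ in $F$, the images in $H$ of $h(1,0,0)$ and $h(0,1,0)$ have $\Delta_{(u,v)}=1\neq 0$, hence fail to commute by Lemma~\ref{lemma:firstmain}(1).

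Having fixed $(u,v)$, the operator computes the structure $F_{(u,v)}=(Z(H),\oplus,\otimes_{(u,v)})$ of Theorem~\ref{Mal'tsevDef} and relabels its universe to be an initial segment of $\omega$. By item~(1) of that theorem, $Z(H)$ is decidable from the oracle ($x\in Z(H)$ iff $[x,u]=[x,v]=1$), so enumerating $Z(H)$ in increasing order as $z_0,z_1,\dots$ yields a computable bijection between an initial segment of $\omega$ and $Z(H)$. The relation $\oplus$ transports along this bijection with no search, being the group operation of $H$ restricted to the center, and the field constants go over to the identity $1_H$ (playing the role of $0$) and to $[u,v]=1_{(u,v)}$ (playing the role of $1$), both computable.

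The one part requiring an unbounded search — and the step I expect to be the only real obstacle — is $\otimes_{(u,v)}$. Given $z_i,z_j\in Z(H)$, dovetail over pairs $(x',y')\in H^2$ looking for $[x',u]=[y',v]=1$, $[x',v]=z_i$, and $[u,y']=z_j$; on finding one, output $[x',y']$ as the product $z_i\otimes_{(u,v)}z_j$. The existence half of the proof of Lemma~\ref{MainLemma}(2) — take $x'=h(\alpha\cdot u_1,\alpha\cdot u_2,0)$ and $y'=h(\beta\cdot v_1,\beta\cdot v_2,0)$, where $z_i=h(0,0,\alpha\cdot\Delta_{(u,v)})$ and $z_j=h(0,0,\beta\cdot\Delta_{(u,v)})$ — shows such witnesses always exist, so the search terminates; the converse half shows every witnessing pair gives the same value $[x',y']=h(0,0,\alpha\cdot\beta\cdot\Delta_{(u,v)})$, so $\otimes_{(u,v)}$ is well-defined. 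Thus $\Phi$ outputs the atomic diagram of a structure that, by Theorem~\ref{Mal'tsevDef}(4) via $g_{(u,v)}$, is isomorphic to $F$.

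Uniformity then costs nothing: the procedure above mentions only the oracle $H$, never $F$, so one and the same $\Phi$ is a Medvedev reduction of $F$ to $H(F)$ for every $F$. The point to keep in mind — and the place where asking for more than the statement claims would break down — is that $\Phi$ need not, and in general will not, respect isomorphism: distinct copies of $H(F)$ typically yield a first non-commuting pair with a different ``slope.'' This is harmless here, since a Medvedev reduction is only required to produce \emph{some} copy of $F$, and any copy of $H(F)$ determines $F$ up to isomorphism; it is precisely the gap between a Medvedev reduction and a Turing computable embedding.
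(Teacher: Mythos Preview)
Your proof is correct and follows exactly the same approach as the paper: search for a non-commuting pair $(u,v)$ in the given copy of $H(F)$, then use Maltsev's definitions (Theorem~\ref{Mal'tsevDef}) to compute $F_{(u,v)}$. The paper's proof is a one-line sketch of precisely this, while you have carefully filled in the computational details (termination of the search, decidability of $Z(H)$, and well-definedness of the witness search for $\otimes_{(u,v)}$ via both directions of Lemma~\ref{MainLemma}(2)).
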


\begin{proof}

Given $G\cong H(F)$, we search for a non-commuting pair $(u,v)$ in $G$, and then use Maltsev's definitions to get a copy of $F$ computable from $G$.  
\end{proof} 

It turns out that the Medvedev reduction $\Phi$ is half of a computable functor.  In the next section, we explain how to get the other half.     

\section{The computable functor}
\label{sec:Morozov}

In the previous section, we saw that for any field $F$ and any non-commuting pair $(u,v)$ in $H(F)$, there is an isomorphic copy $F_{(u,v)}$ of $F$ defined in $H(F)$ by finitary existential formulas with parameters $(u,v)$.  The defining formulas are the same for all $F$.  Hence, there is a uniform Turing operator $\Phi$ that, for all fields $F$, takes copies of $H(F)$ to copies of $F$.  In this section, we describe a companion operator $\Psi$ so that $\Phi$ and $\Psi$ together form a uniform computable functor.  For any field $F$, and any triple $(G_1,p,G_2)$ such that $G_1$ and $G_2$ are copies of $H(F)$ and $p$ is an isomorphism from $G_1$ onto $G_2$, the function $\Psi(G_1,p,G_2)$ must be an isomorphism from $\Phi(G_1)$ onto $\Phi(G_2)$, and, moreover, the isomorphisms given by $\Psi$ must preserve identity and composition.  We saw in the previous section that for any field $F$, and any non-commuting pair $(u,v)$ in $H(F)$, the function $g_{(u,v)}$ taking $\alpha$ to $h(0,0,\alpha\cdot\Delta_{(u,v)})$ is an isomorphism from $F$ onto $F_{(u,v)}$.  We use this $g_{(u,v)}$ below.       

\begin{lem}
\label{Functorial}

For any $F$ and any non-commuting pairs $(u,v)$, $(u',v')$ in $H(F)$, there is a natural isomorphism $f_{(u,v),(u',v')}$ from $F_{(u,v)}$ onto $F_{(u',v')}$.  Moreover, the family of isomorphisms $f_{(u,v),(u',v')}$ is functorial; i.e., 

\begin{enumerate}

\item for any non-commuting pair $(u,v)$, the function $f_{(u,v),(u,v)}$ is the identity,

\item  for any three non-commuting pairs $(u,v)$, $(u',v')$, and $(u'',v'')$, 
\[f_{(u,v),(u'',v'')} = f_{(u',v'),(u'',v'')}\circ f_{(u,v),(u',v')}.\]

\end{enumerate}

\end{lem}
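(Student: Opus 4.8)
The plan is to construct $f_{(u,v),(u',v')}$ explicitly as a composition $g_{(u',v')}\circ g_{(u,v)}^{-1}$, using the isomorphisms supplied by Theorem \ref{Mal'tsevDef}(4). Since $g_{(u,v)}\colon F\to F_{(u,v)}$ and $g_{(u',v')}\colon F\to F_{(u',v')}$ are both isomorphisms with the \emph{same} field $F$ as common source, the composite $f_{(u,v),(u',v')} := g_{(u',v')}\circ g_{(u,v)}^{-1}$ is automatically an isomorphism from $F_{(u,v)}$ onto $F_{(u',v')}$. The functoriality conditions then fall out formally: for condition (1), $f_{(u,v),(u,v)} = g_{(u,v)}\circ g_{(u,v)}^{-1} = \mathrm{id}$; for condition (2), the inner $g$'s telescope, since
\[
f_{(u',v'),(u'',v'')}\circ f_{(u,v),(u',v')}
= g_{(u'',v'')}\circ g_{(u',v')}^{-1}\circ g_{(u',v')}\circ g_{(u,v)}^{-1}
= g_{(u'',v'')}\circ g_{(u,v)}^{-1}
= f_{(u,v),(u'',v'')}.
\]

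First I would write down what this map does concretely. Since $g_{(u,v)}$ sends $\alpha$ to $h(0,0,\alpha\cdot\Delta_{(u,v)})$, the inverse $g_{(u,v)}^{-1}$ sends $h(0,0,c)$ to $c/\Delta_{(u,v)}$, and therefore
\[
f_{(u,v),(u',v')}\bigl(h(0,0,c)\bigr) = h\!\left(0,0,\frac{\Delta_{(u',v')}}{\Delta_{(u,v)}}\cdot c\right),
\]
which is well-defined because $\Delta_{(u,v)}\neq 0$ whenever $(u,v)$ is a non-commuting pair, by Lemma \ref{lemma:firstmain}(1b). I would verify directly that this is a bijection of $Z(H(F))$ onto itself (immediate, as multiplication by the nonzero scalar $\Delta_{(u',v')}/\Delta_{(u,v)}$ is a bijection of $F$) and that it respects $\oplus$ and the multiplications $\otimes_{(u,v)}$, $\otimes_{(u',v')}$. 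The additive part is trivial since $\oplus$ is the group operation and scaling the last coordinate is additive; the multiplicative part is exactly the content of Maltsev's parametrized multiplication, and follows from the fact that $g_{(u,v)}$ and $g_{(u',v')}$ are ring isomorphisms onto their respective images, so any map making the triangle with $F$ commute is itself a ring isomorphism.

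The word "natural" in the statement presumably signals that these isomorphisms are compatible with automorphisms of $H(F)$ and, more to the point, are uniformly computable from the parameters — this is what will be needed in the next step to assemble $\Psi$. So the last thing I would do is observe that $f_{(u,v),(u',v')}$, viewed as an operation on tuples $(u,v,u',v')$ together with an element of $Z(H(F))$, is given by a single quantifier-free formula (computing the ratio of determinants and multiplying), hence is uniformly computable; this uniformity, not any subtlety in the algebra, is the only real point to check carefully, and it is routine. There is essentially no obstacle here: the lemma is a bookkeeping statement whose proof is forced once one notices that all the $F_{(u,v)}$ are canonically identified through the common $F$ via the explicit isomorphisms of Theorem \ref{Mal'tsevDef}.
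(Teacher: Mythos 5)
Your proposal is correct and is essentially the same proof the paper gives: you define $f_{(u,v),(u',v')} = g_{(u',v')}\circ g_{(u,v)}^{-1}$ and obtain the functoriality by the same telescoping computation. The extra commentary about the concrete formula and the definability of $f$ is accurate but belongs to Lemma~\ref{Existential} rather than this lemma.
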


\begin{proof}

We let $f_{(u,v),(u',v')} = g_{(u',v')}\circ g_{(u,v)}^{-1}$.  This is an isomorphism from $F_{(u,v)}$ onto $F_{(u',v')}$.  It is clear that $f_{(u,v),(u,v)}$ is the identity.  Consider non-commuting pairs $(u,v)$, $(u',v')$, and $(u'',v'')$.  We must show that $f_{(u',v'),(u'',v'')}\circ f_{(u,v),(u',v')} = f_{(u,v),(u'',v'')}$.
We have:
\begin{eqnarray*}
f_{(u',v'),(u'',v'')}\circ f_{(u,v),(u',v')}&=&
g_{(u'',v'')}\circ g_{(u',v')}^{-1}\circ g_{(u',v')}\circ g_{(u,v)}^{-1}= \\
&=&g_{(u'',v'')}\circ g_{(u,v)}^{-1}=\\
&=&f_{(u,v),(u'',v'')}.
\end{eqnarray*}
\end{proof} 

The next lemma says that there is a uniform existential definition of the family of isomorphisms $f_{(u,v),(u',v')}$.   

\begin{lem} 
\label{Existential}

There is a finitary existential formula $\psi(u,v,u',v',x,y)$ that, for any two non-commuting pairs $(u,v)$ and $(u',v')$, defines the isomorphism $f_{(u,v),(u',v')}$ taking $x\in F_{(u,v)}$ to $y\in F_{(u',v')}$.    

\end{lem}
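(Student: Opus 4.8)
The plan is to recognize the natural isomorphism $f_{(u,v),(u',v')}$ as an instance of Maltsev multiplication, and then to read off $\psi$ directly from the existential definition of $\otimes_{(u,v)}$ recorded in Theorem~\ref{Mal'tsevDef}(3). Concretely, I will show that $f_{(u,v),(u',v')}$ is the map on $Z(H(F))$ that ``multiplies by $[u',v']$'' inside the field $F_{(u,v)}$; equivalently, its graph is $\{(x,y) : (x,\,[u',v'],\,y) \in \otimes_{(u,v)}\}$.

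Write $\Delta = \Delta_{(u,v)}$ and $\Delta' = \Delta_{(u',v')}$, both nonzero since $(u,v)$ and $(u',v')$ are non-commuting. By Lemma~\ref{lemma:firstmain}(1a) we have $[u',v'] = h(0,0,\Delta') \in Z(H(F))$, so $g_{(u,v)}^{-1}([u',v']) = \Delta'/\Delta$. Hence for $x = h(0,0,\alpha\Delta) = g_{(u,v)}(\alpha)$ in $F_{(u,v)}$,
\[x \otimes_{(u,v)} [u',v'] \;=\; g_{(u,v)}\!\big(\alpha \cdot (\Delta'/\Delta)\big) \;=\; h(0,0,\alpha\Delta') \;=\; g_{(u',v')}(\alpha) \;=\; f_{(u,v),(u',v')}(x),\]
as claimed.

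Substituting the group term $[u',v']$ into the middle argument of the existential description of $\otimes_{(u,v)}$ in Theorem~\ref{Mal'tsevDef}(3) now gives the formula $\psi(u,v,u',v',x,y)$:
\[\exists x' \, \exists y' \;\big(\, [x',u] = 1 \;\wedge\; [y',v] = 1 \;\wedge\; [x',v] = x \;\wedge\; [u,y'] = [u',v'] \;\wedge\; [x',y'] = y \,\big).\]
Every conjunct is an equation between group terms, so $\psi$ is finitary existential, and it has exactly the advertised free variables. To finish, I would check that for non-commuting $(u,v),(u',v')$ this $\psi$ defines the graph of $f_{(u,v),(u',v')}$. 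For the ``$\supseteq$'' direction, given $x = h(0,0,\alpha\Delta) \in Z(H(F))$ I would take $x' = h(\alpha u_1,\alpha u_2,0)$ and $y' = h(\beta v_1,\beta v_2,0)$ with $\beta = \Delta'/\Delta$ --- exactly the witnesses used in the proof of Lemma~\ref{MainLemma}(2) --- and verify the five conjuncts by the same determinant computations, so that $[x',y'] = h(0,0,\alpha\beta\Delta) = h(0,0,\alpha\Delta') = f_{(u,v),(u',v')}(x)$. For the ``$\subseteq$'' direction, suppose $x',y'$ witness $\psi$. Then $x = [x',v]$ and $y = [x',y']$ are central; by Lemma~\ref{lemma:firstmain}(2), $[x',u]=1$ forces $(x'_1,x'_2) = \alpha(u_1,u_2)$ for some $\alpha$, whence $x = [x',v] = h(0,0,\alpha\Delta)$ and so $\alpha = g_{(u,v)}^{-1}(x)$; likewise $[y',v]=1$ forces $(y'_1,y'_2) = \beta(v_1,v_2)$, and $[u,y'] = [u',v']$ forces $\beta\Delta = \Delta'$. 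Computing $\Delta_{(x',y')} = \alpha\beta\Delta = \alpha\Delta'$ then yields $y = h(0,0,\alpha\Delta') = f_{(u,v),(u',v')}(x)$. Since $\alpha$ is determined by $x$ and $\beta$ is fixed, the relation is indeed a function graph.

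The only real point of care is keeping $\psi$ purely positive-existential. Being a ``non-commuting pair'' is a standing hypothesis on the parameters, so no clause such as $[u,v] \neq 1$ is needed; and although $f_{(u,v),(u',v')}$ is defined only on $Z(H(F))$, there is no need to assert $x,y \in Z(H(F))$ separately, because the conjuncts $[x',v] = x$ and $[x',y'] = y$ already force $x$ and $y$ to be commutators in $H(F)$, hence central. So no negated atomic formula and no separate defining formula for the center appear in $\psi$, and $\psi$ is finitary existential, as required.
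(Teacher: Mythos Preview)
Your proposal is correct and follows essentially the same approach as the paper: both identify $f_{(u,v),(u',v')}(x)$ with $x\otimes_{(u,v)}[u',v']$ (the paper writes $[u',v']$ as $1_{(u',v')}$), and then invoke the existential definability of $\otimes_{(u,v)}$ from Theorem~\ref{Mal'tsevDef}(3). You simply carry this one step further by writing out the resulting formula $\psi$ explicitly and reverifying both directions, which the paper leaves implicit.
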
 

\begin{proof}

Since the operation $\otimes_{(u,v)}$ and $1_{(u',v')}$ are definable by $\exists$--formulas with parameters $u,v$ and $u',v'$ respectively, it suffices to prove the equivalence
$$f_{(u,v),(u',v')}(x)=y\Leftrightarrow x\otimes_{(u,v)}1_{(u',v')}=y.$$
First assume that
$f_{(u,v),(u',v')}(x)=y$, i.e.,
$y=g_{(u',v')}\circ g_{(u,v)}^{-1}(x)$.
Let $\alpha=g_{(u,v)}^{-1}(x)$, i.e., $x=h(0,0,\alpha\cdot \Delta_{(u,v)})$.
It follows that
$y=h\left(0,0,\alpha\cdot \Delta_{(u',v')}\right)$.
Then
\begin{eqnarray*}
x\otimes_{(u,v)}1_{(u'v')}&=&
 h\left(0,0,\alpha\cdot \Delta_{(u,v)}\right)\otimes_{(u,v)}h\left(0,0,\Delta_{(u',v')}\right)= \\
 &=&h\left(0,0,\alpha\cdot \Delta_{(u,v)}\right)\otimes_{(u,v)}h\left(0,0,\frac{\Delta_{(u',v')}}{\Delta_{(u,v)}}\cdot \Delta_{(u,v)}\right)= \\
\\
 &=&h\left(0,0,\alpha\cdot \frac{\Delta_{(u',v')}}{\Delta_{(u,v)}}\cdot \Delta_{(u,v)}\right)= \\
 &=&h\left(0,0,\alpha\cdot \Delta_{(u',v')}\right)=y.
\end{eqnarray*}
Assume now that
$x\otimes_{(u,v)}1_{(u',v')}=y$ and let $x=h\left(0,0,\alpha\cdot \Delta_{(u,v)}\right)$.
Then
\begin{eqnarray*}
y&=&x\otimes_{(u,v)}1_{(u',v')}=h\left(0,0,\alpha\cdot \Delta_{(u,v)}\right)\otimes_{(u,v)}h\left(0,0,\Delta_{(u',v')}\right)= \\
&=&h\left(0,0,\alpha\cdot \Delta_{(u',v')}\right)=g_{(u',v')}\circ g_{(u,v)}^{-1}(x)=f_{(u,v),(u',v')}(x).
\end{eqnarray*}       
\end{proof} 

We will use Lemmas \ref{Functorial} and \ref{Existential} to prove the following.       

\begin{prop}

There is a uniform computable functor that, for all fields $F$, takes $H(F)$ to $F$.   

\end{prop}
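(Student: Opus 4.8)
The plan is to build the pair $\Phi,\Psi$ and verify it is a uniform computable functor from the range of Maltsev's embedding back to the class of fields. The operator $\Phi$ is already in hand: given (a copy of) $H(F)$, one searches for a non-commuting pair $(u,v)$, which must exist since $H(F)$ is non-abelian, and then applies Maltsev's existential definitions (Theorem~\ref{Mal'tsevDef}) to produce the copy $F_{(u,v)}$ of $F$ with universe $Z(H(F))$. This is a Turing operator, uniform in $F$. The only subtlety here is that different copies $G_1\cong G_2\cong H(F)$, and even different runs of $\Phi$ on the same copy, may latch onto different non-commuting pairs, so the ``copy of $F$'' that $\Phi$ outputs is only well-defined up to the choice of $(u,v)$; this is exactly what $\Psi$ must repair.

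Next I would define $\Psi$. Suppose $(G_1,p,G_2)$ is given with $G_i\cong H(F)$ and $p\colon G_1\cong G_2$ an isomorphism. On its copy $G_1$, the operator $\Phi$ found some non-commuting pair $(u,v)$, producing $\Phi(G_1)=F_{(u,v)}$ (computed inside $G_1$); on $G_2$ it found $(u',v')$, producing $\Phi(G_2)=F_{(u',v')}$ (computed inside $G_2$). Since $p$ is a group isomorphism, $(p(u),p(v))$ is a non-commuting pair in $G_2$, and $p$ restricts to an isomorphism from $F_{(u,v)}$ (in $G_1$) onto $F_{(p(u),p(v))}$ (in $G_2$) — this is immediate because all the defining formulas of Theorem~\ref{Mal'tsevDef} are first-order and hence preserved by $p$. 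Then inside $G_2$ I compose with the ``change of parameters'' isomorphism $f_{(p(u),p(v)),(u',v')}$ from Lemma~\ref{Functorial}, which is uniformly existentially definable by Lemma~\ref{Existential}. So I set
\[
\Psi(G_1,p,G_2) \;=\; f_{(p(u),p(v)),\,(u',v')}\circ p\restriction F_{(u,v)}\,,
\]
an isomorphism from $\Phi(G_1)$ onto $\Phi(G_2)$, and it is computable from $G_1$, $G_2$, $p$, and the (already-found) pairs $(u,v),(u',v')$, using the existential formula $\psi$ of Lemma~\ref{Existential}.

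It remains to check functoriality: $\Psi$ preserves identity and composition. For the identity, if $p=\mathrm{id}_{G_1}$ then $(p(u),p(v))=(u,v)$ but $\Phi$ applied to the single copy $G_1$ uses one fixed pair, so $(u',v')=(u,v)$, and $f_{(u,v),(u,v)}=\mathrm{id}$ by Lemma~\ref{Functorial}(1); hence $\Psi(G_1,\mathrm{id},G_1)=\mathrm{id}$. For composition, given $p\colon G_1\cong G_2$ and $q\colon G_2\cong G_3$ with $\Phi$ using pairs $(u,v)$, $(u',v')$, $(u'',v'')$ on $G_1,G_2,G_3$ respectively, I must show $\Psi(G_2,q,G_3)\circ\Psi(G_1,p,G_2)=\Psi(G_1,q\circ p,G_3)$. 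Unwinding, the left side is $f_{(q(u'),q(v')),(u'',v'')}\circ q\circ f_{(p(u),p(v)),(u',v')}\circ p$; using that $q$ is a group isomorphism it intertwines the change-of-parameter maps, i.e. $q\circ f_{(p(u),p(v)),(u',v')} = f_{(qp(u),qp(v)),(q(u'),q(v'))}\circ q$ as maps into $G_3$; then the cocycle identity of Lemma~\ref{Functorial}(2) collapses the two $f$'s on the left into $f_{(qp(u),qp(v)),(u'',v'')}$, and the $q\circ p$ that remains is exactly what $\Psi(G_1,q\circ p,G_3)$ uses. I expect this composition check to be the main obstacle — not because it is deep, but because one must be scrupulous about which structure ($G_1$, $G_2$, or $G_3$) each parameter-pair and each $f$ lives in, and about the fact that $q$ transports $f_{(\cdot,\cdot)}$ correctly; the definability of $f$ via $\psi$ (Lemma~\ref{Existential}) is what guarantees this transport is the clean algebraic identity just described. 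Once this is verified, $\Phi,\Psi$ form a uniform computable functor from $H(F)$ to $F$, and (invoking Theorem~\ref{uniform}) we also obtain the promised uniform parameter-free effective interpretation.
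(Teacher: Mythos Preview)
Your proposal is correct and follows essentially the same approach as the paper: define $\Phi$ by searching for the first non-commuting pair and applying Maltsev's definitions, and define $\Psi(G_1,p,G_2)=f_{(p(u),p(v)),(u',v')}\circ p$; identity preservation is immediate from Lemma~\ref{Functorial}(1). For the composition check the paper phrases things slightly differently---it ``transfers everything to $G_3$'' by replacing each $q_i$ with a map $r_i$ of the form $f_{(\cdot,\cdot),(\cdot,\cdot)}$ computed entirely in $G_3$ and then invokes Lemma~\ref{Functorial}(2)---but this transfer is exactly your intertwining identity $q\circ f_{(p(u),p(v)),(u',v')}=f_{(qp(u),qp(v)),(q(u'),q(v'))}\circ q$, justified (as you note) by the first-order definability of $f$ from Lemma~\ref{Existential}; the two arguments are the same in substance.
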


\begin{proof}

Let $\Phi$ be the uniform Medvedev reduction of $F$ to $H(F)$.\  Take copies $G_1,G_2$ of $H(F)$ and take $p$ such that $G_1\cong_p G_2$.\  We describe $q = \Psi(G_1,p,G_2)$ as follows.  Let $(u,v)$ be the first non-commuting pair in $G_1$, and let $(u',v')$ be the first non-commuting pair in $G_2$.\  Now, $p$ takes $(u,v)$ to a non-commuting pair $(p(u),p(v))$, and $p$ maps $F_{(u,v)}$ isomorphically onto $F_{(p(u),p(v))}$.\  The function $f_{(p(u),p(v)),(u',v')}$ is an isomorphism from $F_{(p(u),p(v))}$ onto $F_{(u',v')}$.\  We get an isomorphism $q$ from $F_{(u,v)}$ onto $F_{(u',v')}$ by composing $p$ with $f_{(p(u),p(v)),(u',v')}$.\  For $x\in F_{(u,v)}$, we let $q(x) = f_{(p(u),p(v)),(u',v')}(p(x))$.\  Since $f_{(p(u),p(v)),(u',v')}$ is defined by an existential formula, with parameters $p(u),p(v),u',v'$, we can apply a uniform effective procedure to compute $q$ from $(G_1,p,G_2)$.  

If $G_1 = G_2$ and $p$ is the identity, then $(u,v) = (u',v')$, and by Lemma \ref{Functorial}, $f_{(u,v),(u',v')}$ is the identity. Consider $G_1,G_2,G_3$, all copies of $G$, with functions $p_1,p_2$ such that $G_1\cong_{p_1} G_2$ and $G_2\cong_{p_2} G_3$.\  Then $p_3 = p_2\circ p_1$ is an isomorphism from $G_1$ onto $G_3$. Let $q_1 = \Psi(G_1,p_1,G_2)$, $q_2 = \Psi(G_2,p_2,G_3)$, and $q_3 = \Psi(G_1,p_3,G_3)$.\   We must show that $q_3 = q_2\circ q_1$.\  The idea is to transfer everything to $G_3$ and use Lemma \ref{Functorial}. Let $r_1$ be the result of transferring $q_1$ down to $G_3$---$r_1 = f_{(p_3(u),p_3(v)),(p_2(u’),p_2(v’))}$.\   We have $q_1(x) = y$ iff $r_1(p_3(x)) = p_2(y)$.\  Let $r_2$ be the result of transferring $q_2$ down to $G_3$---$r_2 = f_{(p_2(u’),p_2(v’)),(u'',v'')}$.\  We have $q_2(y) = z$ iff $r_2(p_2(y)) = z$.\  We let $r_3$ be the result of transferring $q_3$ down to $G_3$---$r_3 = f_{(p_3(u),p_3(v)),(u'',v'')}$.\  We have $q_3(x) = z$ iff $r_3(p_3(x)) = z$.\  By Lemma \ref{Functorial}, $r_3 = r_2\circ r_1$.\  If $q_1(x) = y$ and $q_2(y) = z$, then $r_1(p_3(x)) = p_2(y)$, and $r_2(p_2(y)) = z$.\  Then $r_3(p_3(x)) = z$, so $q_3(x) = z$, as required.      
\end{proof}

\begin{cor}
\label{cor:infinterpretation}
There is a uniform effective interpretation of $F$ in $H(F)$.
\end{cor}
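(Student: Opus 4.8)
The plan is to obtain this corollary immediately from the uniform transfer theorem, Theorem~\ref{uniform}, once the ambient Turing computable embedding is identified. Let $K$ be the class of fields and $K'$ the class of $2$-step nilpotent groups, and let $\Theta$ be Maltsev's embedding $F\mapsto H(F)$. First I would check that $K\leq_{tc} K'$ via $\Theta$: the assignment $F\mapsto H(F)$ is plainly computed by a Turing operator, and Theorem~\ref{Mal'tsevDef}, together with the fact that every copy of $H(F)$ contains a non-commuting pair, shows that $F$ is recovered from $H(F)$ up to isomorphism. Hence $\Theta$ both preserves and reflects isomorphism, so it is a Turing computable embedding.

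Next I would invoke the preceding Proposition, which supplies Turing operators $\Phi,\Psi$ forming a uniform computable functor from the groups $\Theta(F)=H(F)$ in the range of $\Theta$ back to the fields $F$. Here $\Phi$ is the uniform Medvedev reduction coming from Maltsev's parameter-driven definitions of $F_{(u,v)}$, and $\Psi$ produces the isomorphisms $q$ described in that proof, built by composing a given isomorphism $p$ with the transition isomorphism $f_{(p(u),p(v)),(u',v')}$ (which is $\exists$-definable by Lemma~\ref{Existential}); identity and composition are preserved, as verified there using the functoriality of the family $f_{(u,v),(u',v')}$ (Lemma~\ref{Functorial}). This is exactly condition (2) of Theorem~\ref{uniform} for the pair $(K,K')$ and the embedding $\Theta$. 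Applying Theorem~\ref{uniform}, condition (2) yields condition (1): there is a fixed collection of generalized computable $\Sigma_1$ formulas, with no parameters, that for every field $F$ defines an interpretation of $F$ in $H(F)$. That is precisely a uniform effective interpretation of $F$ in $H(F)$, which is the assertion of the corollary.

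I expect no genuine obstacle here: all the substantive content (constructing $\Psi$, checking functoriality, and the parameterized $\exists$-definability of the transition isomorphisms) has already been carried out in the preceding lemmas and proposition, and the passage from a uniform computable functor to a uniform effective interpretation is black-boxed by Theorem~\ref{uniform}. The only point requiring a moment's care is bookkeeping: confirming that the phrase ``uniform computable functor from the structures $\Theta(\mathcal{A})$ in the range of $\Theta$ to their pre-images'' in the hypothesis of Theorem~\ref{uniform} matches verbatim what the Proposition delivers, and that the resulting generalized $\Sigma_1$ definitions really are parameter-free — which they are, since a non-commuting pair is searched for inside the group rather than supplied from outside.
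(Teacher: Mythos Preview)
Your proposal is correct and matches the paper's own proof, which simply says to apply the result from \cite{HTM^3} (i.e., Theorem~\ref{uniform}) to the uniform computable functor constructed in the preceding proposition. You have merely spelled out the bookkeeping that the paper leaves implicit.
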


\begin{proof}
Apply the result from \cite{HTM^3}.  
\end{proof}

The result from \cite{HTM^3} gives a uniform interpretation of $F$ in $H(F)$, valid for all countable fields
$F$, using computable $\Sigma_1$ formulas with no parameters. The tuples from $H(F)$ that represent
elements of $F$ may have arbitrary arity.  In the next section, we will do better.

We note here that the uniform interpretation of $F$ in $H(F)$ given in this section
allows one to transfer the computable-model-theoretic properties of any graph $G$
to a 2-step-nilpotent group, without introducing any constants.  This is not a new
result:  in \cite{Mekler}, Mekler gave a related coding of graphs into 2-step-nilpotent
groups, which, in concert with the completeness of graphs for such properties
(see \cite{HKSS}), appears to yield the same fact, although Mekler's coding
had different goals than completeness.  Then, in \cite{HKSS}, Hirschfeldt,
Khoussainov, Shore, and Slinko used Maltsev's interpretation of an integral domain
in its Heisenberg group with two parameters, along with the completeness of
integral domains, to re-establish it.  More recently, \cite{MPSS} demonstrated
the completeness of fields, by coding graphs into fields,  From that result,
along with Corollary \ref{cor:infinterpretation} and the usual definition of $H(F)$
as a matrix group given by a set of triples from $F$, we achieve a coding
of graphs into 2-step-nilpotent groups, different from Mekler's coding, with no constants required.


\section{Defining the interpretation directly}
\label{sec:finitary}

Our goal in this section is to give explicit existential formulas defining a uniform effective
interpretation of a field in its Heisenberg group.  We discovered the formulas for this interpretation
by examining the infinitary formulas used in the interpretation in Corollary \ref{cor:infinterpretation}
and trimming them down to their essence, which turned out to be finitary.

\begin{thm}
\label{thm:parameterfree}

There are finitary existential formulas that, uniformly for every field $F$, define an effective interpretation
of $F$ in $H(F)$, with elements of $F$ represented by triples of elements from $H(F)$.

\end{thm}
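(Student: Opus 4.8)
The plan is to reverse-engineer Maltsev's definition so that the parameters $(u,v)$ become part of the data representing a field element, rather than being fixed in advance. Recall from Theorem~\ref{Mal'tsevDef} that a non-commuting pair $(u,v)$ in $H(F)$ produces a copy $F_{(u,v)}=(Z(H(F)),\oplus,\otimes_{(u,v)})$ of $F$, with the isomorphism $g_{(u,v)}$ sending $\alpha$ to $h(0,0,\alpha\cdot\Delta_{(u,v)})$; and that by Lemma~\ref{Existential} the transition isomorphism $f_{(u,v),(u',v')}=g_{(u',v')}\circ g_{(u,v)}^{-1}$ between any two such copies is uniformly existentially definable. So I would let the domain $D$ consist of all triples $(u,v,x)\in H(F)^3$ such that $(u,v)$ is a non-commuting pair and $x\in Z(H(F))$ --- a condition expressible by a finitary existential formula using Lemma~\ref{lemma:firstmain}(1b) and part~(1) of Theorem~\ref{Mal'tsevDef}. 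The triple $(u,v,x)$ is intended to name the field element $\alpha=g_{(u,v)}^{-1}(x)$.

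Next I would define the equivalence relation: $(u,v,x)\sim(u',v',x')$ iff $f_{(u,v),(u',v')}(x)=x'$, which by Lemma~\ref{Existential} is $x\otimes_{(u,v)}1_{(u',v')}=x'$, an existential condition in the six group variables. One checks via the functoriality in Lemma~\ref{Functorial} that this is genuinely an equivalence relation on $D$, and that $(u,v,x)\sim(u,v,x')$ iff $x=x'$. The field operations are then transported through these names: for addition, $\oplus^*$ holds of triples $(u_1,v_1,x_1),(u_2,v_2,x_2),(u_3,v_3,x_3)$ iff, after transferring all three into the coordinate system $(u_1,v_1)$ via $f$, the $\oplus$-relation holds there --- i.e. $f_{(u_2,v_2),(u_1,v_1)}(x_2)\oplus f_{(u_3,v_3),(u_1,v_1)}(x_3)=\ldots$ arranged so the statement is symmetric enough to be well-defined modulo $\sim$; and similarly multiplication is defined by pulling everything back to $(u_1,v_1)$ and applying the existential definition of $\otimes_{(u_1,v_1)}$ from part~(3) of Theorem~\ref{Mal'tsevDef}. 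Each such relation is a finite conjunction/existential combination of the already-existential ingredients, hence finitary existential. That $(D,\oplus^*,\otimes^*)/_\sim\cong F$ follows because, fixing any one non-commuting pair, the map $(u,v,x)\mapsto [(u,v,x)]_\sim$ restricted to triples with that first coordinate is a bijection onto $D/_\sim$ compatible with the operations, and it composes with $g_{(u,v)}^{-1}$ to give an isomorphism with $F$.

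The remaining obligation is that the \emph{complementary} relations $\not\sim$, $\neg\oplus^*$, $\neg\otimes^*$, and the complement of $D$ are also existentially definable --- this is what makes the interpretation "$\Delta_1$" rather than merely $\Sigma_1$. The complement of $D$ is easy: a triple fails to lie in $D$ iff $u,v$ commute (so $\Delta_{(u,v)}=0$, quantifier-free by Lemma~\ref{lemma:firstmain}(1b)) or $x\notin Z(H(F))$ (quantifier-free: $x$ is not of the form $h(0,0,c)$, by Lemma~\ref{lemma:firstmain}(3)). For $\not\sim$ and the negated operations, the point is that $f_{(u,v),(u',v')}$ is a total function: once $(u,v,x)$ and $(u',v',\cdot)$ are in $D$, there is exactly one $x'$ with $(u,v,x)\sim(u',v',x')$, namely the witness produced by Lemma~\ref{Existential}; so $(u,v,x)\not\sim(u',v',x')$ iff there exists $x''$ with $(u,v,x)\sim(u',v',x'')$ and $x''\neq x'$, and $x''\neq x'$ is quantifier-free in $H(F)$. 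The negations of $\oplus^*$ and $\otimes^*$ are handled the same way, exploiting that addition and multiplication in the field are functions: $\neg\oplus^*(\bar a_1,\bar a_2,\bar a_3)$ iff there is some triple $\bar a_3'$ with $\oplus^*(\bar a_1,\bar a_2,\bar a_3')$ and $\bar a_3'\not\sim\bar a_3$. I expect this last point --- systematically writing each complementary relation as an existential "the function's value exists and differs" statement, and double-checking that each such witness really is unique so the definitions are correct --- to be the main place where care is needed; the algebraic content is all already packaged in Lemmas~\ref{lemma:firstmain}, \ref{MainLemma}, \ref{Functorial}, and \ref{Existential}.
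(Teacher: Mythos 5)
Your proposal matches the paper's proof essentially step for step: the same domain of triples $(u,v,x)$ with $(u,v)$ noncommuting and $x$ central, the same equivalence $\sim$ via the transition isomorphisms $f_{(u,v),(u',v')}$ of Lemma~\ref{Existential}, the same definitions of $\oplus$ and $\odot$ by transferring into a single coordinate system, and the same trick of defining each complementary relation by asserting that the (unique) value exists and differs. The only quibble is your parenthetical reason that the complement of $D$ is quantifier-free (``$x$ is not of the form $h(0,0,c)$'' mixes the matrix picture with the abstract group language); what you actually want, and what your cited Theorem~\ref{Mal'tsevDef}(1) gives, is simply $[u,v]=1 \lor [x,u]\neq 1 \lor [x,v]\neq 1$.
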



We offer intuition before giving the formal proof.  The domain $D$ of the interpretation
will consist of those triples $(u,v,x)$ from $H(F)$ with $uv\neq vu$ and $x$ in the center:
for each single $(u,v)$, we apply Maltsev's definitions, with $u$, $v$ as parameters, to get  
$F_{(u,v)}\cong F$.
We view the triples arranged as follows:\\
\setlength{\unitlength}{0.2in}
\begin{picture}(18,9)(-3,0)

\put(-1,7){\line(1,0){19}}
\multiput(-1,7)(5,0){4}{\line(0,-1){6}}

\put(1,7.5){$F_{(u,v)}$}
\put(6,7.5){$F_{(u',v')}$}
\put(11,7.5){$F_{(u'',v'')}$}
\put(15.5,7.5){$\cdots$}

\put(0,6){$(u,v,x_0)$}
\put(0,5){$(u,v,x_1)$}
\put(0,4){$(u,v,x_2)$}
\put(0,3){$(u,v,x_3)$}
\put(1.5,2){$\vdots$}

\put(5,6){$(u',v',x_0)$}
\put(5,5){$(u',v',x_1)$}
\put(5,4){$(u',v',x_2)$}
\put(5,3){$(u',v',x_3)$}
\put(6.5,2){$\vdots$}

\put(10,6){$(u'',v'',x_0)$}
\put(10,5){$(u'',v'',x_1)$}
\put(10,4){$(u'',v'',x_2)$}
\put(10,3){$(u'',v'',x_3)$}
\put(11.5,2){$\vdots$}

\end{picture}

Here each column can be seen as $F_{(u,v)}$ for some non-commuting pair $(u,v)$.
Now the system of isomorphisms from Lemma \ref{Functorial} will allow us to identify each element
in one column with a single element from each other column, and modding out by this identification
will yield a single copy of $F$.

\begin{proof}

Let $H$ be a group isomorphic to $H(F)$.  Recalling the natural isomorphisms
$f_{(u,v),(u',v')}$ defined in Lemma \ref{Functorial} for non-commuting pairs $(u,v)$ and $(u',v')$,
we define $D\subseteq H$, a binary relation $\sim$ on $D$, and ternary relations $\oplus$, $\odot$
(which are binary operations) on $D$, as follows.

\begin{enumerate}

\item  $D$ is the set of triples $(u,v,x)$ such that $uv\neq vu$ and $xu=ux$ and $xv=vx$.
(Notice that, no matter which non-commuting pair $(u,v)$ is chosen, the set of corresponding
elements $x$ is precisely the center $Z(H)$, by Theorem~\ref{Mal'tsevDef}.) 

\item  $(u,v,x)\sim (u',v',x')$ holds if and only if the isomorphism 
$f_{(u,v),(u',v')}$ from $F_{(u,v)}$ to $F_{(u',v')}$ maps $x$ to $x'$. 

\item  $\oplus((u,v,x),(u',v',y'),(u'',v'',z''))$ holds if there exist $y,z\in H$ such that \\
$(u,v,y)\sim(u',v',y')$ and $(u,v,z)\sim(u'',v'',z'')$, and $F_{(u,v)}\models x+y = z$.

\item  $\odot((u,v,x),(u',v',y'),(u'',v'',z''))$ holds if there exist $y,z\in H$ such that \\
$(u,v,y)\sim(u',v',y')$ and $(u,v,z)\sim(u'',v'',z'')$, and $F_{(u,v)}\models x\cdot y = z$.
\end{enumerate}

Lemma \ref{Existential} yielded a finitary existential formula defining the relation $(u,v,x)$ $\sim (u',v',x')$.
Moreover, the field addition and multiplication were defined in $F_{(u,v)}$ by finitary existential
formulas using $u$ and $v$, which were parameters there but here are elements of the triples in $D$.
Finally, we must consider the negations of the relations.
First, $(u,v,x)\not\sim (u',v',x')$ if and only if some $y'$ commuting with $u'$ and $v'$
satisfies $(u,v,x)\sim (u',v',y')$ and $y'\neq x'$ -- that is, just if $f_{(u,v),(u',v')}$ maps $x$
to some element different from $x'$.  Likewise, since $+$ is a binary operation in $F_{(u,v)}$,
the negation of $\oplus((u,v,x),(u',v',y'),(u'',v'',z''))$ is defined by
saying that some $w''\neq z''$ is the sum:
$$\exists w''([w'',u'']=1=[w'',v'']~\&~w''\neq z''~\&~\oplus((u,v,x),(u',v',y'),(u'',v'',w''))),$$
which is also existential, and similarly for the negation of $\odot$.\ 
Therefore, all of these sets have finitary existential definitions in the language of groups,
with no parameters, as do the negations of $\sim$, $\oplus$, and $\odot$.
(In fact, the complement of $D$ is $\Sigma_1$ as well.)

The functoriality of the system of isomorphisms $f_{(u,v),(u',v')}$ (across all pairs of
pairs of noncommuting elements) ensures that $\sim$ will be an equivalence relation.
Lemma \ref{Functorial} showed that $f_{(u,v),(u,v)}$ is always the identity,
giving reflexivity.  Transitivity follows from the functorial property
in that same lemma:
$$f_{(u,v),(u'',v'')} = f_{(u',v'),(u'',v'')}\circ f_{(u,v),(u',v')},$$
and with $(u'',v'')=(u,v)$, this property also yields the symmetry of $\sim$.

The definitions of $\oplus$ and $\odot$ essentially say to convert all three
triples into $\sim$-equivalent triples with the same initial coordinates $u$ and $v$,
and then to check whether the final coordinates satisfy Maltsev's definitions
of $+$ and $\cdot$ in the field $F_{(u,v)}$.  Understood this way, they clearly
respect the equivalence $\sim$.  Finally, by fixing any single noncommuting pair $(u,v)$,
we see that the set $\{(u,v,x)~:~x\in Z(H)\}$ contains one element from each
$\sim$-class and, under $\oplus$ and $\odot$, is isomorphic to the field $F_{(u,v)}$
defined by Maltsev, which in turn is isomorphic to the original field $F$.
\end{proof}

It should be noted that, although this interpretation of $F$ in $H(F)$ was
developed using computable functors on countable fields $F$, it is valid
even when $F$ is uncountable (or finite).  A full proof requires checking that
the system of isomorphisms $f_{(u,v),(u',v')}$ remains functorial and existentially definable
even in the uncountable case, but this is straightforward.

In Theorem \ref{thm:parameterfree}, to eliminate parameters from
Maltsev's definition of $F$ in $H(F)$, we gave an interpretation of $F$
in $H(F)$, rather than another definition.  (Recall that a definition is an interpretation
in which the equivalence relation on the domain is simply equality.)
We now demonstrate the impossibility of strengthening the theorem
to give a parameter-free definition of $F$ in $H(F)$.

\begin{prop}
\label{prop:nopfree}
There is no parameter-free definition of any field $F$ in its Heisenberg group $H(F)$
by finitary formulas.
\end{prop}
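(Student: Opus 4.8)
The plan is to leverage the rigidity of $H(F)$ captured in Corollary~\ref{cor:orbits}: the group identity is the only element of $H(F)$ fixed by every automorphism of $H(F)$. Suppose, toward a contradiction, that some parameter-free definition of $F$ in $H(F)$ by finitary formulas exists --- that is, for some fixed $k$, a $\emptyset$-definable set $D\subseteq H(F)^k$ equipped with $\emptyset$-definable binary operations $\oplus$ and $\odot$ together with an isomorphism $\theta\colon F\to (D,\oplus,\odot)$, the equivalence relation on $D$ being genuine equality. I would first observe that $\operatorname{Aut}(H(F))$ acts on this copy of $F$ by field automorphisms: each $\pi\in\operatorname{Aut}(H(F))$ acts coordinatewise on $H(F)^k$, and because $D$ and the graphs of $\oplus,\odot$ are defined without parameters, $\pi$ carries $D$ onto $D$ and commutes with $\oplus$ and $\odot$, so $\pi|_D$ is an automorphism of the field $(D,\oplus,\odot)$.

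Next I would pin down the tuples $\theta(0_F)$ and $\theta(1_F)$. These are the additive and multiplicative identities of $(D,\oplus,\odot)$, hence are fixed by every automorphism of that field, in particular by every $\pi|_D$. Writing $\theta(0_F)=(a_1,\dots,a_k)$, coordinatewise invariance says that each $a_j\in H(F)$ is fixed by every automorphism of $H(F)$, so $a_j=1$ by Corollary~\ref{cor:orbits}; thus $\theta(0_F)=(1,\dots,1)$, and the identical argument gives $\theta(1_F)=(1,\dots,1)$. Therefore $\theta(0_F)=\theta(1_F)$, forcing $0_F=1_F$ in $F$, which no field satisfies --- the desired contradiction.

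I do not expect a real obstacle, since the work has been front-loaded into Corollary~\ref{cor:orbits} (resting in turn on Lemma~\ref{lemma:firstmain} and the coordinate-swap automorphism $h(a,b,c)\mapsto h(b,a,ab-c)$). Two points deserve care. First, the argument uses decisively that the interpretation is a \emph{definition}: with a nontrivial $\sim$, the tuples representing $0_F$ and $1_F$ need only be $\sim$-invariant rather than automorphism-fixed on the nose, and the argument collapses --- which is exactly why Theorem~\ref{thm:parameterfree} cannot be improved past an interpretation. Second, one should double-check Corollary~\ref{cor:orbits} in characteristic $2$: there the swap automorphism only yields $2c=0$, but the scalings $h(a,b,c)\mapsto h(ta,tb,t^{2}c)$ with $t\in F^\times$ finish the job provided $F\neq\mathbb{F}_2$ (and indeed $\mathbb{F}_2$ genuinely \emph{is} parameter-free definable in $H(\mathbb{F}_2)$, so that hypothesis cannot simply be dropped). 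Finally, nothing here uses finiteness of the formulas --- automorphisms preserve satisfaction of all parameter-free formulas, finitary or not --- so the same proof excludes parameter-free definitions of $F$ in $H(F)$ by computable $\Sigma_1$ formulas as well.
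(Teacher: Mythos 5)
Your main argument follows essentially the same route as the paper's: automorphisms of $H(F)$ act on the $\emptyset$-definable copy of $F$ by field automorphisms, so the tuples representing $0_F$ and $1_F$ must be fixed by every automorphism of $H(F)$; Corollary~\ref{cor:orbits} then forces both to be the identity tuple, giving $0_F = 1_F$. The paper phrases this slightly differently (every non-identity tuple in $D$ is moved by some automorphism, yet $0_F$ and $1_F$ must be fixed), but the substance is identical.

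Your closing observation, however, is a genuine catch that the paper misses. The proof of Corollary~\ref{cor:orbits} uses the automorphism $h(a,b,c)\mapsto h(b,a,ab-c)$, which sends $h(0,0,c)$ to $h(0,0,-c)$; in characteristic $2$ this is the identity on the center and yields no information about $c$. For $|F|>2$ the scalings $h(a,b,c)\mapsto h(ta,tb,t^2c)$ with $t\in F\setminus\{0,1\}$ repair the gap (in characteristic $2$, $t^2 = 1$ forces $t=1$). But for $F = \mathbb{F}_2$, Corollary~\ref{cor:orbits} is simply false: $Z(H(\mathbb{F}_2))$ has exactly two elements, so its nonidentity element $h(0,0,1)$ is fixed by every automorphism. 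Consequently Proposition~\ref{prop:nopfree} is also false as stated: take $D = Z(H(\mathbb{F}_2))$, let $\oplus$ be the group operation restricted to $D$, and let $x\odot y$ be the nonidentity central element exactly when both $x$ and $y$ are; this is a parameter-free finitary definition of $\mathbb{F}_2$ in $H(\mathbb{F}_2)$. Both the corollary and the proposition should exclude $\mathbb{F}_2$; with that caveat, your proof (and the paper's) is correct. Your remark that the argument extends verbatim to parameter-free infinitary definitions is also right, since automorphisms preserve satisfaction of all parameter-free formulas.
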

\begin{proof}
Suppose that there were such a definition, and let $D\subseteq (H(F))^n$ be its domain.
By Corollary \ref{cor:orbits}, the only $(x_1,\ldots,x_n)\in (H(F))^n$ that is fixed
by all automorphisms of $H(F)$ is the tuple where every $x_i$ is the identity element of $H(F)$.
So, for every $\vec x\in D$ except this identity tuple, there would be an
$\alpha_{\vec x}\in\text{Aut}(H(F))$ that does not fix $\vec x$.  With equality
of $n$-tuples as the equivalence relation on $D$, $\alpha_{\vec x}$ yields
an automorphism of the field $F$
(viewed as $D$ under the definable addition and multiplication)
that does not fix $\vec x$. However, both identity elements $0$ and $1$
in $F$ must be fixed by every automorphism of $F$.
\end{proof}















\section{Question of bi-interpretability}
\label{sec:biinterpretability}

If $\mathcal{B}$ is interpreted in $\mathcal{A}$, we write $\mathcal{B}^\mathcal{A}$ for the copy of $\mathcal{B}$ given by the interpretation of $\mathcal{B}$ in $\mathcal{A}$.  The structures $\mathcal{A}$ and $\mathcal{B}$ are \emph{effectively bi-interpretable} if there are uniformly relatively computable isomorphisms $f$ from $\mathcal{A}$ onto $\mathcal{A}^{\mathcal{B}^\mathcal{A}}$ and $g$ from $\mathcal{B}$ onto $\mathcal{B}^{\mathcal{A}^\mathcal{B}}$.  In general, the isomorphism $f$ would map each element of $\mathcal{A}$ to an equivalence class of equivalence classes of tuples in $\mathcal{A}$.  We would represent $f$ by a relation $R_f$ that holds for $a,\bar{a}_1,\ldots,\bar{a}_r$ if $f$ maps $a$ to the equivalence class of the tuple of equivalence classes of the $\bar{a}_i$'s.  Similarly, the isomorphism $g$ would be represented by a relation $R_g$ that holds for $b,\bar{b}_1,\ldots,\bar{b}_r$ if $g$ maps $b$ to the equivalence class of the tuple of equivalence classes of the $\bar{b}_i$'s.  Saying that $f$ and $g$ are uniformly relatively computable is equivalent to saying that the relations $R_f$, $R_g$, have generalized computable $\Sigma_1$ definitions without parameters.     

For a field $F$ and its Heisenberg group $H(F)$, when we define $H(F)$ in $F$, the elements of $H(F)$ are represented by triples from $F$, and we have finitary formulas, quantifier-free or existential, that define the group operation (as a relation).  When we interpret $F$ in $H(F)$, the elements of $F$ are represented by triples from $H(F)$, and we have finitary existential formulas that define the field operations and their negations (as ternary relations). Thus, in $F^{H(F)^F}$ (the copy of $F$ interpreted in the copy of $H(F)$ that is defined in $F$), the elements are equivalence classes of triples of triples.  In $H(F)^{F^{H(F)}}$ (the copy of $H(F)$ defined in the copy of $F$ that is interpreted in $H(F)$), the elements are triples of equivalence classes of triples.  So, an isomorphism $f$ from $F$ to $F^{H(F)^F}$ is represented by a $10$-ary relation $R_f$ on $F$, and an isomorphism $g$ from $H(F)$ to $H(F)^{F^{H(F)}}$---it is represented by a $10$-ary relation $R_g$ on $H(F)$.              

For a Turing computable embedding $\Theta$ of $K$ in $K'$ we have \emph{uniform effective bi-interpretability} if there are (generalized) computable $\Sigma_1$ formulas with no parameters that, for all $\mathcal{A}\in K$ and $\mathcal{B} = \Theta(\mathcal{A})$, define isomorphisms from $\mathcal{A}$ to $\mathcal{A}^{\mathcal{B}^\mathcal{A}}$ and from $\mathcal{B}$ to $\mathcal{B}^{\mathcal{A}^\mathcal{B}}$.  After a talk by the fifth author, Montalb\'{a}n asked the following very natural question.

\begin{ques} 
\label{ques:Montalban}
Do we have uniform effective bi-interpretability of $F$ and $H(F)$?

\end{ques}

The answer to this question is negative.  In particular, $\mathbb{Q}$ and $H(\mathbb{Q})$ are not effectively bi-interpretable.  One way to see this is to note that $\mathbb{Q}$ is rigid, while $H(\mathbb{Q})$ is not---in particular, for any non-commuting pair, $u,v\in H(\mathbb Q)$, there is a group automorphism that takes $(u,v)$ to $(v,u)$.
The negative answer to Question \ref{ques:Montalban} then follows from \cite[Lemma VI.26(4)]{MontalbanCST},
which states that if $\mathcal{A}$ and $\mathcal{B}$ are effectively bi-interpretable,
then their automorphism groups are isomorphic.

Morozov's result shows which half of effective bi-interpretability causes the difficulties.

\begin{prop} [Morozov]

There is a finitary existential formula that, for all $F$, defines in $F$ a specific isomorphism $k$ from $F$ to $F^{{H(F)}^F}$.  

\end{prop}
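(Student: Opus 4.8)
The plan is to make Maltsev's construction canonical by choosing, once and for all, a non-commuting pair whose determinant $\Delta$ equals $1$, so that the field element $\alpha$ is carried by the single matrix $h(0,0,\alpha)$, and then to name that matrix inside the interpretation of $F$ in $H(F)$ from Theorem~\ref{thm:parameterfree}.

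First I would fix the pair $u_0 = h(1,0,0)$ and $v_0 = h(0,1,0)$ in $H(F)$. When $H(F)$ is defined in $F$ by triples from $F$ (with $h(p,q,r)$ coded by $(p,q,r)$ and the group operation given by the quantifier-free rule $h(p,q,r)\cdot h(p',q',r') = h(p+p',q+q',r+r'+pq')$), both $u_0$ and $v_0$ are term-definable from the field constants $0,1$; they do not commute, and $\Delta_{(u_0,v_0)}=1$. By Theorem~\ref{Mal'tsevDef}, $F_{(u_0,v_0)} = (Z(H(F)),\oplus,\otimes_{(u_0,v_0)})$, and the isomorphism $g_{(u_0,v_0)}\colon F\to F_{(u_0,v_0)}$ is simply $\alpha\mapsto h(0,0,\alpha)$. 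I would then define $k$ by letting $k(\alpha)$ be the $\sim$-class of the triple $(u_0,v_0,h(0,0,\alpha))$ in the interpretation of Theorem~\ref{thm:parameterfree}; in terms of the nine coordinates from $F$ (a triple of triples), $k(\alpha)$ is represented by the $9$-tuple $(1,0,0,\,0,1,0,\,0,0,\alpha)$.

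Next I would check that $k$ is a field isomorphism from $F$ onto $F^{H(F)^F}$. It factors as $g_{(u_0,v_0)}$ followed by the map $x\mapsto [(u_0,v_0,x)]_\sim$. By Lemma~\ref{Functorial}, every class $[(u,v,x)]_\sim$ in the domain $D$ contains the unique triple with first two coordinates $(u_0,v_0)$, namely $(u_0,v_0,f_{(u,v),(u_0,v_0)}(x))$, and such a triple lies in $D$ exactly when its last coordinate lies in $Z(H(F))$ (Lemma~\ref{lemma:firstmain}(3)); hence $x\mapsto [(u_0,v_0,x)]_\sim$ is a bijection from $Z(H(F))$ onto the interpreted field. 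That the operations $\oplus,\odot$ of Theorem~\ref{thm:parameterfree}, restricted to triples with first coordinates $(u_0,v_0)$, transport via $g_{(u_0,v_0)}$ to field addition and multiplication is precisely Lemma~\ref{MainLemma} specialized to $\Delta_{(u_0,v_0)}=1$: one has $h(0,0,\alpha)\cdot h(0,0,\beta) = h(0,0,\alpha+\beta)$ and $h(0,0,\alpha)\otimes_{(u_0,v_0)}h(0,0,\beta) = h(0,0,\alpha\beta)$. So $k$ is an isomorphism. For definability I would describe $k$ by the $10$-ary relation $R_k(a,t_1,\dots,t_9)$ on $F$ which, reading $(t_1,\dots,t_9)$ as a triple $(u,v,x)$ of elements of $H(F)^F$, asserts that $(u,v,x)\in D$ and $(u,v,x)\sim (u_0,v_0,h(0,0,a))$. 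The condition $(u,v,x)\in D$ is quantifier-free in $F$ (the group operation of $H(F)^F$ is quantifier-free definable in $F$, and $D$ is cut out by $uv\neq vu\wedge xu=ux\wedge xv=vx$); the $\sim$-relation has a finitary existential definition in the language of groups (Lemma~\ref{Existential}, as used in Theorem~\ref{thm:parameterfree}), which pulls back through the quantifier-free interpretation of $H(F)$ in $F$ to an existential formula of $F$; and the target triple $(u_0,v_0,h(0,0,a)) = (1,0,0,0,1,0,0,0,a)$ is given by field terms in $a$. Conjoining these yields the desired parameter-free finitary existential formula.

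I expect the main obstacle to be the bijectivity step: confirming that fixing the first two coordinates to the canonical pair selects exactly one representative of each $\sim$-class and that these representatives are parametrized precisely by $Z(H(F))$. This is not deep, but it is the one place where Lemma~\ref{Functorial} and Lemma~\ref{lemma:firstmain} must be combined carefully, and one must also keep the coordinate conventions of the two-layer interpretation (a triple of triples, in the correct order) straight throughout.
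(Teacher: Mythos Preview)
Your proposal is correct and follows essentially the same approach as the paper: fix the canonical non-commuting pair $u_0=h(1,0,0)$, $v_0=h(0,1,0)$ with $\Delta_{(u_0,v_0)}=1$, send $\alpha$ to the $\sim$-class of $(u_0,v_0,h(0,0,\alpha))$, and observe that the existential definition of $\sim$ pulls back through the quantifier-free definition of $H(F)$ in $F$. You supply more detail than the paper does on why $k$ is bijective and why the existential formula survives the two-layer interpretation, but the underlying argument is the same.
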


\begin{proof}

In $F$, we have the copy of $H(F)$, consisting of triples $(a,b,c)$ (representing $h(a,b,c)$), for 
$a,b,c\in F$.  The group operation, derived from matrix multiplication, is 
$(a,b,c)*(a',b',c') = (a+a',b+b',c+c'+ab')$.  The definitions of the universe and the operation are quantifier-free, with no parameters.  We have seen how to interpret $F$ in $H(F)$ using finitary existential formulas with no parameters.  There is a natural isomorphism $k$ from $F$ onto $F^{{H(F)}^F}$ obtained as follows.  In $H(F)$, let $u = h(1,0,0)$ and $v = h(0,1,0)$.  Then $\Delta_{(u,v)} = 1$.  We have an isomorphism mapping $F$ to $F_{(u,v)}$ that takes $\alpha$ to $h(0,0,\alpha)$.\  We let $k(\alpha)$ be the $\sim$-class of $(u,v,h(0,0,\alpha))$.\  The isomorphism $k$ is defined in $F$ by an existential formula.  The complement of $k$ is defined by saying that $k(\alpha)$ has some other value.
\end{proof}

The other half of what we would need for uniform effective bi-interpretability is sometimes impossible,
as remarked above in the case $F=\mathbb Q$.  We do not know of any examples where $F$ and $H(F)$
are effectively bi-interpretable:  the obstacle for $\mathbb Q$ might hold in all cases.

\begin{prob}

For which fields $F$, if any, are the automorphism groups of $F$ and $H(F)$ isomorphic?

\end{prob} 

Even if there are fields $F$ such that Aut$(F)\cong\text{Aut}(H(F))$,
we suspect that $F$ and $H(F)$ are not effectively bi-interpretable, simply because
it is difficult to see how one might give a computable $\Sigma_1$ formula in the language of groups
that defines a specific isomorphism from $H(F)$ to $H(F)^{F^{H(F)}}$.

\section{Generalizing the method}
\label{sec:generalize}   

Our first general definition and proposition follow closely the example
of a field and its Heisenberg group.

\begin{defn}
\label{defn:defn}

Let $\mathcal{A}$ be a structure for a computable relational language. Assume that its basic relations are $R_i$, where $R_i$ is $k_i$-ary.  We say that $\mathcal{A}$ is \emph{effectively defined in $\mathcal{B}$ with parameters} 
$\bar{b}$ if there exist $D(\bar{b})\subseteq\mathcal{B}^{<\omega}$, and $\pm R_i(\bar{b})\subseteq D(\bar{b})^{k_i}$, defined by a uniformly computable sequence of generalized computable $\Sigma_1$ formulas with parameters $\bar{b}$.  

\end{defn}   

\begin{prop}
\label{general}

Suppose $\mathcal{A}$ is effectively defined in $\mathcal{B}$ with parameters $\bar b$.
For $\bar{c}$ in the orbit of $\bar{b}$, let $\mathcal{A}_{\bar{c}}$ be the copy of $\mathcal{A}$ defined by the same formulas, but with parameters $\bar{c}$ replacing $\bar{b}$.  Then the following conditions together suffice to give an effective interpretation of $\mathcal{A}$ in $\mathcal{B}$ without parameters:  

\begin{enumerate}

\item  The orbit of $\bar{b}$ is defined by a computable $\Sigma_1$ formula $\varphi(\bar{u})$;

\item There is a generalized computable $\Sigma_1$ formula $\psi(\bar{u},\bar{v},\bar x,\bar y)$
such that for all $\bar{c},\bar{d}$ in the orbit of $\bar{b}$, the formula $\psi(\bar{c},\bar{d},\bar x,\bar y)$
defines an isomorphism $f_{\bar{c},\bar{d}}$ from $\mathcal{A}_{\bar{c}}$ onto $\mathcal{A}_{\bar{d}}$; and

\item  The family of isomorphisms $f_{\bar{c},\bar{d}}$ preserves identity and composition.  

\end{enumerate}   

\end{prop}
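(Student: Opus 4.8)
The plan is to mimic the concrete construction carried out for $F$ in $H(F)$ in Theorem~\ref{thm:parameterfree}, but now abstractly, using only the three hypotheses. First I would fix the domain of the interpretation. Working in a copy $\mathcal{B}'$ of $\mathcal{B}$, I would set $D$ to be the collection of tuples $\bar{c}^\frown\bar{a}$, where $\bar{c}$ ranges over the orbit of $\bar{b}$ (defined by the computable $\Sigma_1$ formula $\varphi(\bar{u})$ from hypothesis~(1)) and $\bar{a}$ is a tuple lying in the domain $D(\bar{c})$ of the copy $\mathcal{A}_{\bar{c}}$. Since the formula defining $D(\bar{b})$ has no fixed arity, neither does the formula defining $D$; but $\varphi$ is computable $\Sigma_1$ and the formulas for $D(\bar{c})$ are a uniformly computable sequence of generalized computable $\Sigma_1$ formulas with the parameters appearing as displayed free variables, so the conjunction is again a generalized computable $\Sigma_1$ definition of $D$ with no parameters. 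The same bookkeeping handles the complement of $D$: its definition is the disjunction of ``$\bar{c}$ is not in the orbit of $\bar{b}$'' (which is $\Sigma_1$ because $\pm\varphi$ is, since $\varphi$ defines an orbit and its complement is the union of the other orbits, each itself $\Sigma_1$-definable -- or, more safely, I should note that in Definition~\ref{defn:defn} the hypothesis ``$\pm R_i(\bar b)$'' already carries a $\Sigma_1$ definition of the complement of the domain, and ``$\bar c$ in the orbit of $\bar b$'' paired with hypothesis~(1) gives $\pm\varphi$), disjoined with ``$\bar{a}\notin D(\bar{c})$'' using the given $\Sigma_1$ definition of the complement of the domain of $\mathcal{A}_{\bar c}$.

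Next I would define the equivalence relation. Put $\bar{c}^\frown\bar{a}\sim\bar{d}^\frown\bar{a}'$ iff $\psi(\bar{c},\bar{d},\bar{a},\bar{a}')$ holds, i.e.\ iff the isomorphism $f_{\bar{c},\bar{d}}$ of hypothesis~(2) carries $\bar a$ to $\bar a'$. This is a generalized computable $\Sigma_1$ relation with no parameters, being the composition of $\varphi(\bar c)$, $\varphi(\bar d)$ and $\psi$. The proof that $\sim$ is an equivalence relation is exactly the argument in Theorem~\ref{thm:parameterfree}: reflexivity because hypothesis~(3) forces each $f_{\bar c,\bar c}$ to be the identity (take the composition $f_{\bar c,\bar c}\circ f_{\bar c,\bar c}=f_{\bar c,\bar c}$, plus preservation of identity); transitivity from $f_{\bar d,\bar e}\circ f_{\bar c,\bar d}=f_{\bar c,\bar e}$; and symmetry by taking $\bar e=\bar c$ in the composition law. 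For the relations $R_i^*$, I would say that $R_i^*$ holds of a $k_i$-tuple of elements of $D$ iff, after using $\sim$ to replace each of the $k_i$ tuples by a $\sim$-equivalent tuple sharing the \emph{same} parameter block $\bar c$ (which exists and is unique up to $\sim$ by the isomorphism property in hypothesis~(2)), the resulting $\bar a$-parts satisfy $R_i(\bar c)$ in $\mathcal{A}_{\bar c}$. Formally: $R_i^*(\bar c_1{}^\frown\bar a_1,\ldots,\bar c_{k_i}{}^\frown\bar a_{k_i})$ iff there exist $\bar a_2',\ldots,\bar a_{k_i}'$ with $\bar c_j{}^\frown\bar a_j\sim\bar c_1{}^\frown\bar a_j'$ for $j\geq 2$ and $R_1(\bar c_1)$ holds of $\bar a_1,\bar a_2',\ldots,\bar a_{k_i}'$ in $\mathcal{A}_{\bar c_1}$. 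This is again generalized computable $\Sigma_1$ with no parameters, as it is an existential combination of $\sim$, $\varphi$, and the given $\Sigma_1$ definitions of the $R_i(\bar b)$'s (with the parameter block now a free variable block). The complement $\neg R_i^*$ is handled as in Theorem~\ref{thm:parameterfree}: one existentially guesses all the transported tuples and invokes the $\Sigma_1$ definition of $\neg R_i(\bar b)$; the point is that ``transported to parameter block $\bar c_1$'' is itself $\Sigma_1$ (and its negation too, since $f_{\bar c,\bar d}$ is a total function definable by $\psi$ whose negation we have by Definition~\ref{defn:defn} applied to the graph of $f$), so we never need a $\Pi_1$ clause. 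One cleaner way to organize this, which I would actually use, is first to prove the auxiliary fact that ``$y$ is the $f_{\bar c,\bar d}$-image of $x$'' and its negation are both generalized computable $\Sigma_1$ with no parameters -- this is immediate from $\psi$ together with the observation that $f_{\bar c,\bar d}$ is a bijection, so $y\neq f_{\bar c,\bar d}(x)$ iff $\exists y'\,(y'=f_{\bar c,\bar d}(x)\ \&\ y'\neq y)$ -- and then everything above is a routine $\Sigma_1$ combination.

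Finally I would verify that the quotient is the right structure. Fix one tuple $\bar c_0$ in the orbit of $\bar b$ (it exists because the orbit is nonempty, $\bar b$ itself witnessing it). The set $\{\bar c_0{}^\frown\bar a:\bar a\in D(\bar c_0)\}$ meets every $\sim$-class exactly once -- it meets each class because $f_{\bar c,\bar c_0}$ maps any $\bar c^\frown\bar a$ to such a tuple, and it meets each class at most once because $f_{\bar c_0,\bar c_0}$ is the identity -- and the relations $R_i^*$ restricted to this transversal are, by construction, exactly the relations $R_i(\bar c_0)$ on $D(\bar c_0)$. Hence the quotient $(D,(R_i^*)_i)/_\sim$ is isomorphic to $\mathcal{A}_{\bar c_0}$, which is isomorphic to $\mathcal{A}$ by Definition~\ref{defn:defn}. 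That completes the verification of both clauses of the definition of effective interpretation.

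The step I expect to be the main obstacle is the careful treatment of the \emph{complements} of the relations in the general setting, where tuples have unbounded arity and the parameter blocks are themselves part of the data. In the Heisenberg case one could cite Lemma~\ref{Existential} and a short explicit computation for each negation; here I must argue in the abstract that ``transported to a common parameter block, then satisfying $R_i$'' has a $\Sigma_1$-definable complement, and the only honest route is the auxiliary lemma that the graph of each $f_{\bar c,\bar d}$ has a generalized $\Delta_1$ definition (equivalently, that $\psi$ and the $\Sigma_1$ definition of the complement of the graph of $f$ are both available), combined with the fact that $\sim$ itself is then generalized $\Delta_1$. Once that auxiliary fact is in place, all the remaining complements are built from it and from the $\pm R_i(\bar b)$ of Definition~\ref{defn:defn} by $\Sigma_1$ operations, and the proof goes through exactly as in Theorem~\ref{thm:parameterfree}.
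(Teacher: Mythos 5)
Your construction of $D$, $\sim$, and $R_i^*$ matches the paper's proof of Proposition~\ref{general} almost line for line, including the key trick of getting $\not\sim$ and $\neg R_i^*$ in $\Sigma_1$ form by existentially quantifying the (unique) image under $f_{\bar c,\bar d}$ and asserting it differs from the given element; your transversal argument for identifying the quotient with $\mathcal{A}_{\bar c_0}$ is the same verification the paper dispatches by reference to Theorem~\ref{thm:parameterfree}.

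One caution about the aside on the complement of $D$: the reasoning you offer there is not sound. The claim that $\pm\varphi$ is available because ``the complement of the orbit of $\bar b$ is a union of $\Sigma_1$-definable orbits'' has no justification -- other orbits need not have $\Sigma_1$ definitions, and even if each did, the complement of a single orbit would not thereby be $\Sigma_1$. Likewise, $\pm R_i(\bar b)\subseteq D(\bar b)^{k_i}$ gives you the complement of $R_i(\bar b)$ \emph{relative to} $D(\bar b)^{k_i}$, not a $\Sigma_1$ definition of the complement of $D(\bar b)$ itself. Fortunately this whole passage is unnecessary: Montalb\'an's notion of effective interpretation (Definition in Section~\ref{sec:intro}) requires $\Delta_1$ definitions of $\sim$ and the $R_i^*$ on $D$, but only a $\Sigma_1$ definition of $D$ -- not of its complement. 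The paper's proof makes no mention of the complement of $D$, and neither should you; drop that paragraph and the rest stands.
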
    

\begin{proof}

We write $D(\bar{b})$, $\pm R_i(\bar{b})$ for the set and relations that give a copy of $\mathcal{A}$ and for the defining formulas (with parameters $\bar{b}$).  We obtain a parameter-free interpretation of $\mathcal{A}$ in $\mathcal{B}$ as follows:

\begin{enumerate}

\item  Let $D$ consist of the tuples $(\bar{c},\bar x)$ such that $\bar{c}$ is in the orbit of $\bar{b}$
and $\bar x$ is in $D(\bar{c})$.  This is defined by a generalized computable $\Sigma_1$ formula.    

\item  Let $\sim$ be the set of pairs $((\bar{c},\bar x),(\bar{d},\bar y))$ in $D^2$
such that $f_{\bar{c},\bar{d}}(\bar x) = \bar y$.  This is defined by a generalized computable $\Sigma_1$ formula.
For pairs $(\bar{c},\bar x)$, $(\bar{d},\bar y)$ from $D$, it follows that $(\bar{c},\bar x)\not\sim(\bar{d},\bar y)$
if and only if
$$(\exists \bar{y}')( (\bar d, \bar y') \in D~\&~f_{\bar{c},\bar{d}}(\bar x) = \bar{y}'\ \&\ \bar{y}'\not= \bar y).$$
Hence the negation of $\sim$ is also defined by a generalized computable $\Sigma_1$ formula.       

\item  We let $R_i^*$ be the set of $k_i$-tuples 
$((\bar{b}_1,\bar x_1),\ldots,(\bar{b}_{k_i},\bar x_{k_i}))$ in $D^{k_i}$ such that for the tuple
$(\bar y_1,\ldots,\bar y_{k_i})$ with $f_{\bar{b}_j,\bar{b}_1}(\bar x_j) = \bar y_j$, we have
$(\bar y_1,\ldots,\bar y_{k_i})\in R_i(\bar{b}_1)$.  This is defined by a generalized computable $\Sigma_1$ formula.
The complementary relation $\neg{R_i^*}$ is the set of tuples $((\bar{b}_1,\bar x_1),\ldots,(\bar{b}_{k_i},\bar x_{k_i}))$
such that for $\bar y_1,\ldots,\bar y_{k_i}$ as above, $(\bar y_1,\ldots,\bar y_{k_i})\in\neg{R_i(\bar{b}_1)}$.
This is also defined by a generalized computable $\Sigma_1$ formula.            

\end{enumerate}
The verification is identical to that of Theorem \ref{thm:parameterfree}.
\end{proof}

\begin{cor}
\label{cor:general}
In the situation of Proposition \ref{general}, if $D(\bar b)$ is contained in $\B^n$
for some single $n\in\omega$, then the $\psi$ in item (2) and
the formulas in Definition \ref{defn:defn} will simply be computable $\Sigma_1$ formulas
(as opposed to generalized computable $\Sigma_1$ formulas) and the interpretation of $\A$ in $\B$
without parameters will also be by computable (as opposed to generalized) $\Sigma_1$ formulas.
\qed\end{cor}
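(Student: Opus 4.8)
The plan is to lean on one elementary observation: a generalized computable $\Sigma_1$ definition $\bigvee_m\varphi_m(\bar x_m)$ of a relation $R$ that is contained in a single Cartesian power $\B^{m_0}$ collapses to an ordinary computable $\Sigma_1$ formula, because for every $m\neq m_0$ the formula $\varphi_m$ defines $R\cap\B^m=\emptyset$, so $\bigvee_m\varphi_m$ and the single computable $\Sigma_1$ formula $\varphi_{m_0}$ define the same relation. (Recall that each $\varphi_m$ is by definition already a computable $\Sigma_1$ formula.) The whole corollary then amounts to checking that, under the hypothesis $D(\bar b)\subseteq\B^n$, every relation appearing in the hypotheses of Proposition \ref{general} and in the interpretation it produces is confined to one fixed Cartesian power.

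First I would handle the input data. Write $\ell$ for the length of $\bar b$. The hypothesis gives $D(\bar b)\subseteq\B^n$ outright, and then $R_i(\bar b)\subseteq D(\bar b)^{k_i}\subseteq\B^{nk_i}$, so the formulas of Definition \ref{defn:defn} define relations of fixed arity and hence may be taken to be ordinary computable $\Sigma_1$ formulas by the observation above. For $\psi$ I first note that the hypothesis propagates along the orbit: if $\sigma\in\mathrm{Aut}(\B)$ with $\sigma(\bar b)=\bar c$, then, since the defining formula is preserved by $\sigma$ and $\sigma$ carries $\B^n$ bijectively onto itself, $D(\bar c)=\sigma(D(\bar b))\subseteq\B^n$. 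Thus the universe of $\A_{\bar c}$ consists of $n$-tuples for every $\bar c$ in the orbit, so the graph of each $f_{\bar c,\bar d}$ is a relation on $D(\bar c)\times D(\bar d)\subseteq\B^{2n}$, and $\psi(\bar u,\bar v,\bar x,\bar y)$ (with $\bar u,\bar v$ of length $\ell$ and $\bar x,\bar y$ of length $n$) defines a relation of the fixed arity $2\ell+2n$; it collapses to an ordinary computable $\Sigma_1$ formula just as above.

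Next I would trace the construction in the proof of Proposition \ref{general}, tracking arities. There $D$ consists of the tuples $(\bar c,\bar x)$ with $\bar c$ in the orbit of $\bar b$ and $\bar x\in D(\bar c)$; by the previous paragraph these are exactly certain $(\ell+n)$-tuples, so $D\subseteq\B^{\ell+n}$, defined by substituting the now-ordinary formulas for $\varphi$ and for $D(\cdot)$. The relation $\sim$ lives in $(\B^{\ell+n})^2$ and is obtained by substitution from $\psi$; its negation is defined by $(\exists\bar y')\,((\bar d,\bar y')\in D~\&~\psi(\bar c,\bar d,\bar x,\bar y')~\&~\bar y'\neq\bar y)$, where the witness $\bar y'$ is forced into $D(\bar d)\subseteq\B^n$, so the quantifier is over a single Cartesian power and the whole formula is ordinary computable $\Sigma_1$. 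Likewise each $R_i^*$ and $\neg R_i^*$ lives in $(\B^{\ell+n})^{k_i}$ and is defined by plugging the ordinary formulas for $\psi$ and $\pm R_i(\cdot)$ into the recipe of Proposition \ref{general}, with the auxiliary witnesses $\bar y_j$ ranging over $\B^n$. Hence the whole parameter-free interpretation is by ordinary computable $\Sigma_1$ formulas. I expect no genuine obstacle here: the only step requiring care is this arity bookkeeping, in particular confirming that the existential witnesses introduced for $\not\sim$ and $\neg R_i^*$ range over a fixed $\B^n$ and not over $\B^{<\omega}$, which is immediate from $D(\bar c)\subseteq\B^n$; and the verification that $\sim$ is an equivalence relation respected by the $R_i^*$ is unchanged from the proof of Theorem \ref{thm:parameterfree}.
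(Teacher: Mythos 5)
Your proposal is correct and is precisely the arity-bookkeeping verification that the paper's bare \qed tacitly asserts; the paper offers no written proof, treating the collapse of each generalized computable $\Sigma_1$ formula $\bigvee_m\varphi_m$ to its single relevant disjunct as immediate once every relation in sight is confined to a fixed Cartesian power. Your observation that $D(\bar c)=\sigma(D(\bar b))\subseteq\B^n$ for every $\bar c$ in the orbit, so that the existential witnesses in the definitions of $\not\sim$ and $\neg R_i^*$ range over $\B^n$ rather than $\B^{<\omega}$, is exactly the point that makes this immediate.
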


The reader will have noticed that we only produced an \emph{interpretation} of $\A$ in $\B$,
even though we originally had a \emph{definition} (with parameters) of $\A$ in $\B$.
Proposition \ref{prop:nopfree} shows that in general this is the best that can be done.  On the other hand,
we may extend Proposition \ref{general} and remove parameters even in the case where
$\mathcal{A}$ is interpreted (as opposed to being defined) with parameters in $\mathcal{B}$.  

\begin{defn} [Effective Interpretation with Parameters]

We say that $\mathcal{A}$, with basic relations $R_i$, $k_i$-ary, is \emph{effectively interpreted with parameters}
$\bar{b}$ in $\mathcal{B}$ if there exist $D\subseteq\mathcal{B}^{<\omega}$, $\equiv\subseteq D^2$,
and $R_i^*\subseteq D^{k_i}$ such that 

\begin{enumerate}

\item  $(D,(R_i^*)_i)/_\equiv\cong\mathcal{A}$, 

\item  $D$, $\pm\equiv$, and $\pm R_i^*$ are defined by a computable sequence of generalized computable $\Sigma_1$ formulas, with a fixed finite tuple of parameters $\bar{b}$.       

\end{enumerate}

\end{defn}

Again, in the case where $D\subseteq\mathcal{B}^n$ for some fixed $n$, the formulas defining the effective interpretation are computable $\Sigma_1$ formulas of the usual kind, with parameters $\bar{b}$.    

\begin{prop}
\label{prop:removeparameters}

Suppose that $\mathcal{A}$ (with basic relations $R_i$, $k_i$-ary) has an effective interpretation in $\mathcal{B}$
with parameters $\bar{b}$.  For $\bar{c}$ in the orbit of $\bar{b}$, let $\mathcal{A}_{\bar{c}}$ be the copy
of $\mathcal{A}$ obtained by replacing the parameters $\bar{b}$ by $\bar{c}$ in the defining formulas,
with domain $D_{\bar c}/\!\equiv_{\bar c}$ containing $\equiv_{\bar c}$-classes $[\bar a]_{\equiv_{\bar c}}$.  Then the following conditions suffice for an effective interpretation
of $\mathcal{A}$ in $\mathcal{B}$ (without parameters):

\begin{enumerate}  

\item  The orbit of $\bar{b}$ is defined by a computable $\Sigma_1$ formula $\varphi(\bar{x})$; 

\item  There is a relation $F\subseteq\mathcal{B}^{<\omega}$, with a generalized computable
$\Sigma_1$-definition, such that for every $\bar{c}$ and $\bar{d}$ in the orbit of $\bar{b}$, the set of pairs
$(\bar x,\bar y)\in D_{\bar c}\times D_{\bar d}$ with $(\bar{c},\bar{d},\bar{x},\bar{y})\in F$
is invariant under $\equiv_{\bar c}$ on $\bar x$ and under $\equiv_{\bar d}$ on $\bar y$, and
defines an isomorphism $f_{\bar{c},\bar{d}}$ from $\mathcal{A}_{\bar{c}}$ onto $\mathcal{A}_{\bar{d}}$; and


\item The family of isomorphisms $f_{\bar{c},\bar{d}}$ preserves identity and composition. 

\end{enumerate}

\end{prop}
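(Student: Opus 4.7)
The plan is to mimic the construction in Proposition \ref{general} and Theorem \ref{thm:parameterfree}, but with an extra layer of equivalence classes to account for the fact that the given interpretation already carries a nontrivial $\equiv_{\bar c}$ on each $D_{\bar c}$. The new domain $D^*$ will consist of all tuples $(\bar c,\bar x)$ with $\bar c$ in the orbit of $\bar b$ and $\bar x\in D_{\bar c}$; the new equivalence $\sim$ on $D^*$ will be exactly the relation cut out by $F$, i.e.\ $(\bar c,\bar x)\sim(\bar d,\bar y)$ iff $(\bar c,\bar d,\bar x,\bar y)\in F$. Thought of in terms of the picture in Section~\ref{sec:finitary}, one column is $\mathcal{A}_{\bar c}$ and the isomorphisms $f_{\bar c,\bar d}$ simultaneously glue the columns together and absorb the equivalences $\equiv_{\bar c}$.

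First I would check that $D^*$, $\sim$, and the various $R_i^\dagger$ are definable by generalized computable $\Sigma_1$ formulas. The definition of $D^*$ is immediate from condition (1) and the formula for $D$ with parameters replaced by $\bar c$. For $\sim$, I would use the given $F$. The key point is that reflexivity, symmetry, and transitivity of $\sim$ follow from condition (3): functoriality of the family $\{f_{\bar c,\bar d}\}$ makes $f_{\bar c,\bar c}$ the identity map on $D_{\bar c}/\equiv_{\bar c}$ (giving reflexivity, and in particular implying that $\equiv_{\bar c}$ is absorbed into $\sim$), and the composition law together with $f_{\bar c,\bar c}=\mathrm{id}$ forces $f_{\bar d,\bar c}=f_{\bar c,\bar d}^{-1}$ (giving symmetry and transitivity).

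Next I would handle the negations, which are the subtle part. Once $\sim$ is known to be an equivalence relation containing $\equiv_{\bar c}$ on each column, one has, for $(\bar c,\bar x),(\bar d,\bar y)\in D^*$,
\[(\bar c,\bar x)\not\sim (\bar d,\bar y)\iff \exists \bar y'\bigl((\bar d,\bar y')\in D^*\ \&\ (\bar c,\bar d,\bar x,\bar y')\in F\ \&\ \bar y'\not\equiv_{\bar d}\bar y\bigr),\]
which is a generalized computable $\Sigma_1$ formula because $D^*$, $F$, and $\not\equiv_{\bar d}$ are. For the relations, I set $R_i^\dagger((\bar c_1,\bar x_1),\ldots,(\bar c_{k_i},\bar x_{k_i}))$ to hold iff there exist $\bar y_2,\ldots,\bar y_{k_i}$ in $D_{\bar c_1}$ with $(\bar c_j,\bar c_1,\bar x_j,\bar y_j)\in F$ for $j\geq 2$ and $R_i^*(\bar x_1,\bar y_2,\ldots,\bar y_{k_i})$ in $\mathcal{A}_{\bar c_1}$. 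The negation of $R_i^\dagger$ is obtained by replacing $R_i^*$ with its negation; invariance of $F$ under the two $\equiv$'s from condition (2) makes both well-defined on $\sim$-classes.

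Finally I would verify that $(D^*,(R_i^\dagger))/\!\sim\ \cong\ \mathcal{A}$ by fixing any $\bar c$ in the orbit of $\bar b$: the map sending $[\bar x]_{\equiv_{\bar c}}\in \mathcal{A}_{\bar c}$ to $[(\bar c,\bar x)]_{\sim}$ is bijective and (by definition of $R_i^\dagger$) a homomorphism, which combined with $\mathcal{A}_{\bar c}\cong \mathcal{A}$ completes the argument. The main obstacle I anticipate is the bookkeeping around the two layers of equivalence: one must ensure that $F$'s invariance under $\equiv_{\bar c}$ and $\equiv_{\bar d}$ really propagates through the definition of $R_i^\dagger$ and, more delicately, through the $\Sigma_1$ expression for $\not\sim$, where one needs the negation of $\equiv_{\bar d}$ to be available as a generalized computable $\Sigma_1$ formula. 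Everything else is a direct translation of the argument used for Theorem \ref{thm:parameterfree}, with $F$ playing the role that $\psi$ played in Proposition \ref{general}.
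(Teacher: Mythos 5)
Your proposal is correct and follows essentially the same route as the paper's proof: define the new domain as tuples $(\bar c,\bar x)$ with $\bar c$ in the orbit and $\bar x\in D_{\bar c}$, let $\sim$ be cut out by $F$, define $\not\sim$ by witnessing some $\bar y'$ that is the $F$-image of $\bar x$ and is $\not\equiv_{\bar d}$ to $\bar y$, transport the relations to a single column via $F$, and observe that all the needed negations ($\not\equiv_{\bar c}$, $\neg R_i^*$) are supplied by the hypothesis that the parametrized interpretation is effective. Your remarks about absorbing $\equiv_{\bar c}$ into $\sim$ and about the role of the invariance clause in condition (2) are accurate; the paper makes the same points more tersely.
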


\begin{proof}\
Let the new domain $D$ consist of those tuples $(\bar{c},\bar x)$ with $\bar{c}$
in the orbit of $\bar{b}$ and $\bar x$ in $D_{\bar{c}}$.  This is defined by a
generalized computable $\Sigma_1$ formula.    

Let the equivalence relation $\sim$ on $D$ be the set of pairs $((\bar{c},\bar x),(\bar{d},\bar y))\in D^2$ such that
$f_{\bar{c},\bar{d}}([\bar x]_{\equiv_{\bar c}}) = [\bar y]_{\equiv_{\bar d}}$.
This is defined by a generalized computable $\Sigma_1$ formula.  For $(\bar{c},\bar x)$, $(\bar{d},\bar y)\in D$,
we have $(\bar{c},\bar x)\not\sim(\bar{d},\bar y)$ if and only if
$$(\exists \bar y'\in D_{\bar d})~(f_{\bar{c},\bar{d}}([\bar x]_{\equiv_{\bar c}}) = [\bar y']_{\equiv_{\bar d}}~\&~\bar y\not\equiv_{\bar d}\bar y').$$
Hence $\not\sim$ is also defined by a generalized computable $\Sigma_1$ formula.       

Let $R_i^*$ be the set of $k_i$-tuples 
$((\bar{b}_1,\bar x_1),\ldots,(\bar{b}_{k_i},\bar x_{k_i}))$ in $D^{k_i}$ such that for the tuple
$(\bar y_1,\ldots,\bar y_{k_i})$ with 
$f_{\bar{b}_j,\bar{b}_1}(\bar x_j) = \bar y_j$, we have $(\bar y_1,\ldots,\bar y_{k_i})\in R_i(\bar{b}_1)$.
This is defined by a generalized computable $\Sigma_1$-formula.  The complementary relation
$\neg{R_i^*}$ is the set of tuples $((\bar{b}_1,\bar x_1),\ldots,(\bar{b}_{k_i},\bar x_{k_i}))$ such that for
$\bar y_1,\ldots,\bar y_{k_i}$ as above, $(\bar y_1,\ldots,\bar y_{k_i})\in\neg{R_i(\bar{b}_1)}$.
This too is defined by a generalized computable $\Sigma_1$ formula. 
Finally, as in the proofs of Theorem \ref{thm:parameterfree} and Proposition \ref{general},
it is clear that this yields an interpretation of $\A$ in $\B$ without parameters.
\end{proof}

A relation $R\subseteq\mathcal{B}^{<\omega}$ may have a definition that is \emph{generalized computable $\Sigma_\alpha$} for a computable ordinal $\alpha$, or \emph{generalized $X$-computable $\Sigma_\alpha$} for an $X$-computable ordinal $\alpha$, or \emph{generalized $L_{\omega_1\omega}$}, or \emph{generalized $\Sigma_\alpha$} for a countable ordinal $\alpha$.  The definition has the form $\bigvee_n \varphi_n(\bar{x}_n)$, where the sequence of disjuncts (each in $L_{\omega_1\omega}$, but of different arities $n$) is computable, or 
$X$-computable, or just countable.  We note that each generalized $L_{\omega_1\omega}$ formula is generalized $X$-computable $\Sigma_\alpha$ for an appropriately chosen $X$ and $\alpha$, and each generalized $\Sigma_\alpha$-formula is generalized $X$-computable $\Sigma_\alpha$ for an appropriately chosen $X$.

As computable structure theorists, we have focused here on effective interpretations.
Nevertheless, we wish to point out that our results apply not only to effective interpretations,
but to all interpretations using generalized $L_{\omega_1\omega}$ formulas.
The following theorem generalizes Proposition \ref{prop:removeparameters}
and considers every variation we can imagine.

\begin{thm}
\label{thm:fullygeneral}
Let $\mathcal{A}$ be a relational structure with basic relations $R_i$ that are $k_i$-ary. Suppose there is an interpretation of $\A$ in $\B$ by generalized $L_{\omega_1\omega}$ formulas,
with parameters $\bar b$ from $\B$. For $\bar{c}$ in the orbit of $\bar{b}$,
let $\mathcal{A}_{\bar{c}}$ be the copy of $\mathcal{A}$ obtained by the interpretation
with parameters $\bar{c}$ replacing $\bar{b}$.  Assume that there is a 
generalized $L_{\omega_1\omega}$-definable relation $F$
defining, for each $\bar c$ and $\bar d$ in the orbit of $\bar b$, an
isomorphism $f_{\bar{c},\bar{d}}:\mathcal{A}_{\bar{c}}\to\mathcal{A}_{\bar{d}}$
as in Proposition \ref{prop:removeparameters}, and that this family
is closed under composition, with the identity map as $f_{\bar c,\bar c}$ for all $\bar c$.

Then there is an interpretation of $\A$ in $\B$ by $L_{\omega_1\omega}$ formulas
without parameters.  Moreover, the new interpretation satisfies all of the following.
\begin{itemize}
\item
For each countable ordinal $\alpha$, if the interpretation in $(\B,\bar b)$ defines
$D$, $\equiv$, and each $R_i$ using $\Sigma_\alpha$
formulas from $L_{\omega_1\omega}$, and $F$ and the orbit of $\bar b$
in $\B$ are both defined by $\Sigma_\alpha$ formulas, then the parameter-free interpretation
also uses $\Sigma_\alpha$ formulas to define these sets.

\item
For each countable ordinal $\alpha$, if the interpretation in $(\B,\bar b)$ defines
each of $D$, $\pm\!\equiv$, and $\pm R_i$ using $\Sigma_\alpha$
formulas, and $F$ and the orbit of $\bar b$
in $\B$ are both defined by $\Sigma_\alpha$ formulas, then the parameter-free interpretation
also uses $\Sigma_\alpha$ formulas to define its domain, its equivalence relation $\sim$,
the complement $\not\sim$, and its relations $\pm R_i$.  (Defining $\not\sim$ and $\neg R_i$
this way is required by the usual notion of effective $\Sigma_\alpha$ interpretation.)

\item
Let $X\subseteq\omega$.  If the interpretation in $(\B,\bar b)$ used $X$-computable
formulas, and $F$ and the orbit of $\bar b$
in $\B$ are both defined by $X$-computable formulas, then the parameter-free interpretation
also uses $X$-computable formulas.

Of course, for every countable set
of $L_{\omega_1\omega}$ formulas, there is an $X$ that computes them all.
If the signature of $\A$ is infinite, and the formulas for the interpretation of $\A$ in
$(\B,\bar b)$ are computable uniformly in $X$, then so are the formulas 
for the parameter-free interpretation of $\A$ in $\B$.

(With $X=\emptyset$, $X$-computable formulas are simply computable formulas.)

\item
If the interpretation in $(\B,\bar b)$ had domain contained in $\B^n$ for a single $n$,
so that the defining formulas for this interpretation and for $F$
in $\B$ are all in $L_{\omega_1\omega}$ (as opposed to generalized $L_{\omega_1\omega}$),
then the parameter-free interpretation also uses (non-generalized) $L_{\omega_1\omega}$ formulas,
and its domain is contained in $\B^{n+|\bar b|}$.

\item
If the interpretation in $(\B,\bar b)$ used finitary
formulas, and $F$ and the orbit of $\bar b$
in $\B$ are both defined by finitary formulas, then the parameter-free interpretation
also uses finitary formulas.
\end{itemize}
\end{thm}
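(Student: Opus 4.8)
The plan is to run verbatim the construction from the proofs of Theorem~\ref{thm:parameterfree} and Proposition~\ref{prop:removeparameters}, and then to carry out the bookkeeping showing that each refinement in the displayed list is inherited. Concretely, I would take the new domain $D$ to consist of the tuples $(\bar c,\bar x)$ with $\bar c$ in the orbit of $\bar b$ and $\bar x\in D_{\bar c}$; take $\sim$ to be the set of pairs $((\bar c,\bar x),(\bar d,\bar y))\in D^2$ with $(\bar c,\bar d,\bar x,\bar y)\in F$, i.e.\ with $f_{\bar c,\bar d}([\bar x]_{\equiv_{\bar c}})=[\bar y]_{\equiv_{\bar d}}$; and take $R_i^*$ to be the set of $k_i$-tuples $((\bar b_1,\bar x_1),\dots,(\bar b_{k_i},\bar x_{k_i}))$ in $D^{k_i}$ such that, transporting every coordinate into the first column via $f_{\bar b_j,\bar b_1}$, the resulting tuple of $\equiv_{\bar b_1}$-classes lies in $R_i$ of $\A_{\bar b_1}$. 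The negations $\not\sim$ and $\neg R_i^*$ are given by the explicit existential formulas already displayed in Proposition~\ref{prop:removeparameters}: $\not\sim$ says that there is a $\bar y'\in D_{\bar d}$ with $(\bar c,\bar d,\bar x,\bar y')\in F$ and $\bar y\not\equiv_{\bar d}\bar y'$, and $\neg R_i^*$ says that the transported tuple lies in the complement relation $\neg R_i$ of $\A_{\bar b_1}$.

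The verification that $(D,(R_i^*)_i)/\!\sim\,\cong\A$ uses nothing about the complexity of the defining formulas. The hypotheses that each $F$-section is $\equiv_{\bar c}$- and $\equiv_{\bar d}$-invariant and defines an isomorphism $f_{\bar c,\bar d}$, together with the fact that this family contains the identity maps $f_{\bar c,\bar c}$ and is closed under composition, make $\sim$ reflexive, symmetric and transitive, make $R_i^*$ well defined on $\sim$-classes, and show that fixing any single $\bar c$ in the orbit of $\bar b$ exhibits $\{(\bar c,\bar x):\bar x\in D_{\bar c}\}$ as a transversal carrying a copy of $\A_{\bar c}\cong\A$. This is word-for-word the argument of Theorem~\ref{thm:parameterfree} and Proposition~\ref{prop:removeparameters}, and in particular it is valid for arbitrary interpretations by (generalized) $L_{\omega_1\omega}$ formulas and for $\A$, $\B$ of any cardinality, so I would simply invoke it.

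What remains is the complexity bookkeeping, and this is where care is needed. The key point is that every defining formula of the new, parameter-free interpretation is built from the given defining formulas --- those for the orbit of $\bar b$, for $F$, and for $D$, $\equiv$, $R_i$ (and, where the refinement calls for it, for $\not\equiv$ and $\neg R_i$) with parameters $\bar b$ --- using only these operations: replacing the parameters $\bar b$ by free variables (which does not change the quantifier complexity of a formula), finite conjunction, countable disjunction (the generalized disjunction over arities), and existential quantification over finitely many elements of $\B$. No negation, no universal quantifier, and nothing else is ever applied to the given formulas; this is exactly why we obtain an interpretation and not a definition. Each of the listed classes of formulas --- finitary $\Sigma_1$; computable, $X$-computable, or plain $\Sigma_\alpha$ for the appropriate kind of ordinal $\alpha$; and finitary formulas of arbitrary quantifier complexity --- is closed under those four operations. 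Hence: if the given formulas for $D$, $\equiv$, $R_i$, $F$ and the orbit all lie in one of these classes, so do the new formulas for $D$, $\sim$ and $R_i^*$ (first two bullets, non-effective version); and if moreover the given formulas for $\not\equiv$ and $\neg R_i$ lie in that class, then so do $\not\sim$ and $\neg R_i^*$, which is what an effective $\Sigma_\alpha$- (or $X$-computable-) interpretation demands. The uniformity-in-$X$ claim for infinite signatures is immediate, since the recipe turning the old formulas into the new ones is fixed and uniform in the index $i$; and when the original domain lies in $\B^n$ for a single $n$, every new tuple $(\bar c,\bar x)$ has length $n+|\bar b|$, so the new domain lies in $\B^{n+|\bar b|}$ and all formulas can be taken non-generalized, with fixed arity.

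I expect the only genuine obstacle to be organizational rather than mathematical: one must state once, cleanly, the closure lemma for these formula classes --- in particular that $\Sigma_\alpha$ (in each of its computable, $X$-computable and plain versions) is closed under finite conjunction, countable disjunction and existential quantification, so that the negations $\not\sim$ and $\neg R_i^*$, built via the displayed existential formulas from $\pm\equiv$ and $\pm R_i$, remain $\Sigma_\alpha$ --- and then observe that each of the five bullets is a direct instance of it. There is no new mathematical content beyond Proposition~\ref{prop:removeparameters}; the work lies entirely in being precise about which input formulas each output formula depends on, and in the generalized-versus-non-generalized and uniformity fine print.
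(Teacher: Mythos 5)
Your proposal follows the same approach as the paper's proof: run the construction of Proposition \ref{prop:removeparameters} with the parameters promoted to free variables, then track the complexity of the resulting formulas. Your verification of the five bulleted refinements is sound; in each of those, the orbit of $\bar b$ is explicitly assumed to have a defining formula of the stated complexity, so your closure-under-operations bookkeeping goes through.

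There is, however, one genuine gap, and it is exactly the point the paper singles out. The headline claim of the theorem --- that there is an interpretation of $\A$ in $\B$ by $L_{\omega_1\omega}$ formulas without parameters --- is stated with \emph{no} hypothesis that the orbit of $\bar b$ is definable. Yet your new domain $D$ consists of pairs $(\bar c,\bar x)$ where $\bar c$ lies in the orbit of $\bar b$, so you need a formula for that orbit, and you explicitly list ``the formula for the orbit of $\bar b$'' among the given inputs to your bookkeeping. For the headline claim this input is not given; the paper supplies it by citing Scott's theorem \cite{Scott}, which guarantees that in a countable structure every automorphism orbit of a finite tuple is definable by an $L_{\omega_1\omega}$ formula. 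Without that observation your construction has no formula to insert for the orbit, and the first sentence of the conclusion is not justified. This is a small, easily patched omission --- one sentence invoking Scott --- but it is a real one, and it is the only non-routine ingredient the paper adds beyond Proposition \ref{prop:removeparameters}.
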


\begin{proof}
We obtain the parameter-free
interpretation just as in the proof of Proposition \ref{prop:removeparameters}.
Notice that, by a result of Scott in \cite{Scott}, the orbit of $\bar b$ must be
definable by some $L_{\omega_1\omega}$ formula.
Checking the specific claims is simply a matter of writing out the new formulas
using the old ones.
\end{proof}


\begin{thebibliography}{999}

\bibitem{CCKM}  W.\ Calvert, D. F.\ Cummins, J.F.\ Knight, \& S.\ Miller (Quinn), ``Comparing classes of finite structures,'' \emph{Algebra and Logic}, vol.\ 43(2004), pp.\ 374--392.  

\bibitem{FS}  H.\ Friedman \& L.\ Stanley, ``A Borel reducibility theory for classes of countable structures,'' \emph{JSL}, vol.\ 54(1989), pp.\ 894--914.  

\bibitem{HTM^3}  M.\ Harrison-Trainor, A.\ Melnikov, R.\ Miller, \& A.\ Montalb\'{a}n, ``Computable functors and effective interpretability,'' \emph{JSL}, vol.\ 82(2017), pp.\ 77--97. 

\bibitem{HTM^2}  M.\ Harrison-Trainor, R.\ Miller, \& A.\ Montalb\'{a}n, ``Borel functors and infinitary interpretations,'' \emph{JSL}, vol.\ 83(2018), pp.\ 1434--1456.  


\bibitem{HKSS}
D.R.\ Hirschfeldt, B.\ Khoussainov, R.A.\ Shore, \& A.M.\ Slinko;
Degree spectra and computable dimensions in algebraic structures,
\emph{Annals of Pure and Applied Logic}
\textbf{115} (2002), 71-113.  

\bibitem{Kalimullin} I.\ Kalimullin, ``Algorithmic reducibilities of algebraic structures,'' \emph{J.\ of Logic and Computation}, vol.\ 22(2012), pp.\ 831--843.  




\bibitem{Maltsev}  A. Maltsev, ``Some correspondences between rings and groups,'' \emph{Matematicheskii Sbornik, New Series}, vol.\ 50(1960), pp.\ 257--266.  


\bibitem{Mekler}  A.\ Mekler, ``Stability of nilpotent groups of class $2$ and prime exponent,'' \emph{JSL}, vol.\ 46 (1981), pp.\ 781--788.  

\bibitem{MPSS} R.\ Miller, B.\ Poonen, H.\ Schoutens, \& A.\ Shlapentokh, ``A computable functor from graphs to fields,'' \emph{JSL}, vol.\ 83(2018), pp.\ 326--348.  

\bibitem{MontalbanCST}  A.\ Montalb\'{a}n, \emph{Computable Structure Theory:  Within the Arithmetic},
\emph{Perspectives in Logic}, Cambridge University Press, Cambridge, 2021.




\bibitem{Scott}  D.\ Scott, ``Logic with denumerably long formulas and finite strings of quantifiers,'' in \emph{The Theory of Models}, ed.\ J.W.\ Addison, L.\ Henkin, \& A.\ Tarski, North-Holland, 1965, pp.\ 329--341. 

\end{thebibliography}
\end{document}